\documentclass[lettersize,journal]{IEEEtran}
\usepackage{amsmath,amsfonts,amssymb,amsthm}
\usepackage{color,xcolor}
\usepackage{comment}
\usepackage{url}
\usepackage{graphicx}
\usepackage{subcaption}
\usepackage{enumerate}
\usepackage{cite}
\usepackage{mathrsfs}  

\newtheorem{lemma}{Lemma}
\newtheorem{theorem}{Theorem}

\DeclareMathOperator{\e}{\mathrm{e}}
\newtheorem{proposition}{Proposition}
\newtheorem{corollary}{Corollary}

\usepackage{hyperref}%
\hypersetup{colorlinks=true, linkcolor=blue, breaklinks=true, urlcolor=blue, citecolor=blue}

%

%


\begin{document}

\title{Design of Cycles by Impulsive Feedback: Application to Discrete Dosing
}

\author{Alexander V. Medvedev, Anton V. Proskurnikov, 
and Zhanybai Zhusubaliyev
\thanks{Alexander V. Medvedev [{\tt\small alexander.medvedev@it.uu.se}] is with Department of Information Technology, 
        Uppsala University, SE-752 37 Uppsala, Sweden.
        }%
\thanks{Anton V. Proskurnikov  [{\tt\small anton.p.1982@ieee.org}] is with Department of Electronics and Telecommunications, Politecnico di Torino, Turin, Italy, 10129.}%
\thanks{Zhanybai T. Zhusubaliyev [{\tt\small zhanybai@hotmail.com}] is with Department of Computer Science, International Scientific Laboratory for
Dynamics of Non-Smooth Systems, Southwest State University, Kursk, Russia and Faculty of Mathematics and Information Technology, Osh State University, Lenin st. 331, 723500, Osh, Kyrgyzstan.}
}

\markboth{IEEE Transactions,~Vol.~XX, No.~X, XXXX}%
{Medvedev \MakeLowercase{\textit{et al.}}: Design of Cycles by Impulsive Feedback}


\maketitle

\begin{abstract}
The task of maintaining a predefined level of effect in a dynamical plant by applying  periodic control actions often arises in, e.g., process control and medicine. When the state variables of the plant represent the concentrations of chemical substances and the control action constitutes an instantaneous introduction of a certain quantity of a chemical or drug, this control setup is referred to as a (discrete) dosing problem.  The present paper examines an amplitude- and frequency-modulated impulsive controller that, under stationary conditions, generates a desired sequence of uniform and equidistant control impulses based on continuous measurements of the output of a smooth positive nonlinear time-invariant single-input single-output plant with Wiener structure. The controller design method is based on constructing and stabilizing the fixed point of a discrete map that describes the evolution of the state vector of the continuous plant between successive impulsive control action instants. Stability of the fixed point ensures the existence of a basin of attraction around the stationary trajectory, where  solutions of the closed-loop system converge to the stationary solution after perturbation. The convergence rate is determined by the slopes of the amplitude and frequency modulation functions of the impulsive controller. The proposed controller is applied to dosing of the drug \emph{atracurium} in closed-loop  neuromuscular blockade, and its performance is evaluated on a \textcolor{black}{database of patient-specific pharmacokinetic-pharmacodynamic models estimated from clinical data}.
It is demonstrated that an implementation of the standard regimen as a pulse-modulated feedback controller  significantly reduces the incidence of underdosing events.
\end{abstract}

\begin{IEEEkeywords}
Nonlinear dynamical systems, amplitude modulation, frequency modulation, pulse modulation, medical control systems, anesthesia.
\end{IEEEkeywords}

\section{Introduction}
An everyday example of a dosing application is adhering to a doctor's orders on a medication regimen, e.g., ``take one tablet twice a day.'' This open-loop dosing strategy does not account for the actual medication effect in the particular patient. Typically, after initiating the treatment, the doctor evaluates the treated condition's symptoms in the patient and adjusts the regimen accordingly, thereby applying \emph{feedback}. 



Increasing or decreasing the amount of each dose corresponds to the mechanism of amplitude modulation in pulse-modulated control~\cite{GC98,LBS,SP_impulsive}, whereas adjusting the dosing interval represents frequency modulation. The principles of amplitude and frequency modulation feedback implementing discrete dosing are widely employed in biological systems, e.g., in endocrine regulation~\cite{WTT10,MPZ18,TMP19}. 

Besides pharmacotherapies, where drugs are administered in tablet or injection form, similar discrete dosing problems -- characterized by intermittent impulsive control actions and continuous effect measurements -- are common in  industrial processes. Examples include space technology, water treatment, food production, chemical and biochemical processes, agriculture, steelmaking and mining. This contrasts with continuous dosing, where the flow rate of a chemical is adjusted to achieve the desired effect. Industrial dosing control systems typically operate in open-loop mode and are implemented using discrete logic or automata~\cite{AH22}. An early example of applying optimal control to dosing can be found in R.~Bellman's work~\cite{B71}. However, open-loop control cannot mitigate the effects of disturbances and model uncertainty in the plant, which necessitates the use of feedback in dosing. 


This paper develops and evaluates a new framework for impulsive dosing control design, hinging on the findings in recent conference papers~\cite{MPZ23,MPZ23a,MPZ24}. The latter publications demonstrate that a nonlinear amplitude and frequency pulse modulator can be designed to control a positive continuous linear time-invariant third-order plant to a specified periodic solution. By reducing the hybrid system dynamics to a discrete nonlinear map, the local transient properties of the closed-loop system are determined by the location of the multipliers of the fixed point corresponding to the stationary periodic solution. It is readily observed that the structure of the closed-loop system is identical to that of the Impulsive Goodwin's Oscillator (IGO), a mathematical model of pulsatile endocrine regulation~\cite{MCS06,Aut09,PRM24}. Therefore, the pulse-modulated controller considered further in this paper can be seen as biomimetic.

The application illustrating the utility of the proposed framework is neuromuscular blockade (NMB).
NMB causes skeletal muscle relaxation and is routinely used in anesthesia to optimize surgical conditions. Underdosing NMB can lead to inadequate paralysis, while overdosage may extend neuromuscular block beyond the time necessary for surgery and anesthesia.
The effect of NMB agents is measured by neuromuscular monitors~\cite{MH06}, devices that electrically stimulate a peripheral nerve while also quantifying the evoked responses. 

The long-term (one to ten days) NMB is practiced  during mechanical ventilation in the intensive care unit. It has become especially common in connection with treatment for COVID-19 during inpatient hospitalization~\cite{TZP22}. There is wide interpatient variability in required NMB agent dosage  and the latter may decrease or increase with time. To ensure proper dosing, it is therefore important to monitor the depth of NMB.


\textcolor{black}{NMB drugs are administered either in intermittent doses or by continuous infusion. After an initial bolus dose, sequential maintenance doses or a certain infusion rate are needed to sustain anesthesia. 
The optimal mode of NMB administration is under debate~\cite{KHK26}. Prolonged continuous NMB infusions carry a higher risk of drug accumulation, causing excessive paralysis, delayed recovery,  and prolonged neuromuscular weakness. Intermittent boluses consume less total drug and allow serial neurological evaluations, but their main drawback is fluctuating levels of paralysis, leading to periods where the patient may not be adequately blocked. This motivates the development of feedback controllers for discrete NMB dosing.}

Closed-loop control of NMB was addressed early in the development of automatic anesthesia since the plant is single-input single-output and the pharmacokinetics (PK) are uncomplicated. When a patient-specific pharmacokinetic-pharmacodynamic (PK/PD) model is available, the controller design problem is not challenging, particularly for maintaining NMB after the initial bolus dose is administered in open loop. For instance, relay control is reported  to handle closed-loop NMB drug administration effectively, ensuring performance appropriate for surgery~\cite{WGB87}. This highlights an important feature of closed-loop drug delivery: Control performance is irrelevant unless it translates into a clinical effect.  In~\cite{UMB88}, patient variability with respect to NMB drugs is identified as the main challenge in closed-loop administration. This issue is addressed by integrating patient-specific control with the support of online system identification of the nonlinear PK/PD model. The same paper also discusses the limitations of using a fixed PID controller, noting that it fails to deliver adequate performance given the model uncertainty. As shown in~\cite{ZMS15}, besides nonlinear oscillations (limit cycles) typical of PID stabilization control of nonlinear systems, deterministic chaos can arise in the closed-loop NMB at lower concentrations of the anesthetic drug. Based on bifurcation analysis, a systematic approach to online recovery from oscillations  is proposed and evaluated in  simulation in~\cite{MZR19}.

\textcolor{black}{
  Recent publications highlighted the benefits of an impulsive mode of drug delivery in anesthesiology, in contrast with continuous infusion that currently dominates the area. A so-called programmed intermittent bolus (PIB) technique uses automatic pumps to deliver regular boluses of medication. Studies suggest that larger volume, longer inter-dose interval boluses are more effective in analgesia than shorter interval, lower volume, or continuous delivery \cite{HLH20}.  At the same time, PIB is currently a strictly open-loop approach without an inherent individualization mechanism and this paper aims at offering a closed-loop implementation of the intermittent bolus principle. Then, with such a solution, the therapeutical benefits of PIB can be combined with the fundamental features of feedback control, such as lower sensitivity to model uncertainty and suppression of exogenous disturbance.
}


The main contribution of the present paper is an analysis of a pulse-modulated controller suggested in~\cite{MPZ23} and its  specialization to a realistic drug dosing problem. An analytic expression for the fixed point corresponding to a desired periodic solution of the closed-loop system is provided in Theorem~\ref{pro:fp}. An orbital stability condition for the periodic solution  in terms of the slopes of the modulation functions is proven in Theorem~\ref{th:stability}. This theorem also gives an upper bound for the achievable convergence rate to the periodic solution under perturbation. When evaluated over a population of Wiener-structured PK/PD NMB models estimated from clinical data, a pulse-modulated dosing controller designed for the population mean model parameters is shown to exhibit acceptable performance and robustness. It also significantly improves the incidence of NMB agent underdosing events compared to open-loop administration.


The rest of the paper is organized as follows. Section~\ref{sec:NMB-intro} introduces the mathematical model of NMB employed to analyze the properties of the impulsive feedback controller along with the dataset underlying the numerical experiments in this study. The controller design problem at hand is mathematically formulated in Section~\ref{sec:formulation}. 
The closed-loop dynamics are reduced to a return map and the main fixed-point and stability results are established in Section~\ref{sec:nonlin-dynam}. An evaluation of controller performance and robustness on a realistic cohort of NMB patients is presented in Section~\ref{sec:case-study}. Conclusions are drawn in Section~\ref{sec:conclude}, followed by appendices with the proofs of the theorems and lemmas.

\section{Neuromuscular blockade model}\label{sec:NMB-intro}
A continuous-time Wiener model for NMB with the muscle relaxant {\it atracurium} under general  closed-loop  anesthesia  is introduced in~\cite{SWM12}. The model assumes continuous infusion of the drug and the input $u(t)$   is the administered atracurium rate in $\lbrack \mu\mathrm{g}\ \mathrm{kg}^{-1}\mathrm{min}^{-1} \rbrack $, positive and bounded: $0\le u(t) \le u_{\max}$. 
The current NMB level determines the model output $y(t)$ 
$\lbrack \% \rbrack$, which is measured by a train-of-four monitor (a peripheral nerve stimulator). The maximal level of output $y(t)=100\%$ is achieved at the instant when the  NMB is initiated and there is no drug in the bloodstream. 

\subsection{Continuous-time Wiener PK/PD model}

The PK model part is assumed to be linear and time-invariant, with a rational transfer function from the input $u(t)$ to the serum drug concentration $\bar y(t)$ defined as follows
\begin{equation}\label{eq:lin_NMB}
W(s)=\frac{\bar Y(s)}{U(s)}=\frac{v_1 v_2 v_3 \alpha^3}{(s+v_1\alpha)(s+v_2\alpha)(s+v_3\alpha)}.
\end{equation}
Here $\bar Y (s)={\cal L}\{\bar y(t)\}$, $U (s)={\cal L}\{ u(t)\}$, and ${\cal L}\{\cdot \}$ denotes the Laplace transform.  The parameter $0<\alpha\le  0.1$ is patient-specific and estimated from data, whereas the other parameters  in~\eqref{eq:lin_NMB} are fixed: $v_1=1$, $v_2=4$, and $v_3=10$. The pole spectrum of~\eqref{eq:lin_NMB} is scaled linearly with $\alpha$, and the static gain is adjusted to one.
The PD part of the NMB model output is  static and relates the output of~\eqref{eq:lin_NMB} to the effect measured by the monitor through a nonlinear Hill-type function
\begin{equation}\label{eq:nonlin_NMB}
y(t)= 
\varphi(\bar y(t)),\;\;\text{where}\;\;
\varphi(z)\triangleq\frac{100 C_{50}^\gamma}{ C_{50}^\gamma + {z}^\gamma},
\end{equation}
where $C_{50}=3.2425$ $ \mu \mathrm{ g} \ \mathrm{ml}^{-1} $ is the drug concentration that produces 50\% of the maximum effect, and $0<\gamma\le 10$ is a patient-specific parameter. With  model~\eqref{eq:lin_NMB},~\eqref{eq:nonlin_NMB}, the effect of the NMB agent on the patient is captured by a pair $(\alpha,\gamma)$. 

A state-space realization of Wiener model~\eqref{eq:lin_NMB},~\eqref{eq:nonlin_NMB} is 
\begin{equation}                            \label{eq:1}
\dot{x}(t) =Ax(t)+Bu(t), \ \bar y (t)=Cx(t), \ y(t)=\varphi(\bar y(t)),
\end{equation}
where the coefficients of the linear part are
\begin{equation}                            \label{eq:1+}
\begin{gathered}
A=\begin{bmatrix} -a_1 &0 &0 \\ g_1 & -a_2 &0 \\ 0 &g_2 &-a_3 \end{bmatrix}, B=\begin{bmatrix} 1 \\ 0 \\ 0\end{bmatrix}, C^\top =\begin{bmatrix}0\\0\\1\end{bmatrix},\\
a_1\triangleq v_1\alpha,a_2\triangleq v_2\alpha,a_3\triangleq v_3\alpha,g_1\triangleq v_1\alpha,g_2\triangleq v_2v_3\alpha^2,
\end{gathered}
\end{equation}
and the state variables are $x= [x_1,x_2,x_3]^\top$.

It is readily observed that the matrix $A$ is Hurwitz and Metzler. The asymptotic stability of the linear part in~\eqref{eq:1} aligns with the natural decay of chemical substances over time, and the positivity of $x$ ensures that the state variables can be interpreted as concentrations.  The chain structure of the linear part \textcolor{black}{corresponds to a PK/PD model with sequential compartments, where $x_i$ stands for the drug concentration in the $i$-th compartment.}


\subsection{Dataset}\label{sec:data}
The dataset used in this study is described in detail in~\cite{SWM12}. It was further employed in~\cite{RMM14} to compare the performance of two recursive parameter estimation techniques on clinical data with respect to the NMB model described in Section~\ref{sec:NMB-intro}.  The dynamics of closed-loop controllers based on this model are investigated in~\cite{ZMS15} and~\cite{MZRS16}. 

The model parameter estimates for 48 patients are illustrated in Fig.~\ref{fig:dist}. The population mean parameter values are $\bar\alpha=0.0374, \bar\gamma=2.6677$. The PK parameter $\alpha$ varies by $48\%$ across the dataset, whereas the PD parameter $\gamma$ varies by nearly $75\%$. 
The correlation between the estimates of $\alpha$ and $\gamma$ is low, as seen in Fig.~\ref{fig:alpha_gamma}. 
However, models with high values of $\gamma$ do not exhibit high values of $\alpha$; in contrast, the model with Patient Identification Number (PIN) 26 features the lowest value of $\gamma$ and the highest value of $\alpha$.

\textcolor{black}{A feasibility analysis of the models in the dataset was performed in \cite{MP26}, where the maintenance dose $\lambda^*$ and dosing period $T^*$ that are necessary to keep the output $y(t)$ within
\begin{equation}\label{eq:clinical_boundaries}
{\bf y}_{\min}=2 \le y(t)\le {\bf y}_{\max}=10,
\end{equation}
were calculated for each model. Models that required exaggerated drug doses (over $\lambda_{\max}=600~\mathrm{\mu g/kg}$) to produce acceptable effect ($y(t)<{\bf y}_{\max}$) were judged infeasible. Some of them combined elevated doses with
reasonable timing whereas other ones exhibited prolonged  effect (longer than $T_{\max}=45~\mathrm{min}$) along with exaggerated maintenance dose. The resulting model classification and parameter clustering are depicted in Fig.~\ref{fig:alpha_gamma}.
All the infeasible models exhibit low values of $\gamma$, i.e., low drug sensitivity. Yet, the actual distinction between feasible and infeasible models is more intricate. }
\begin{figure}[ht]
\centering 
\includegraphics[width=0.99\linewidth]{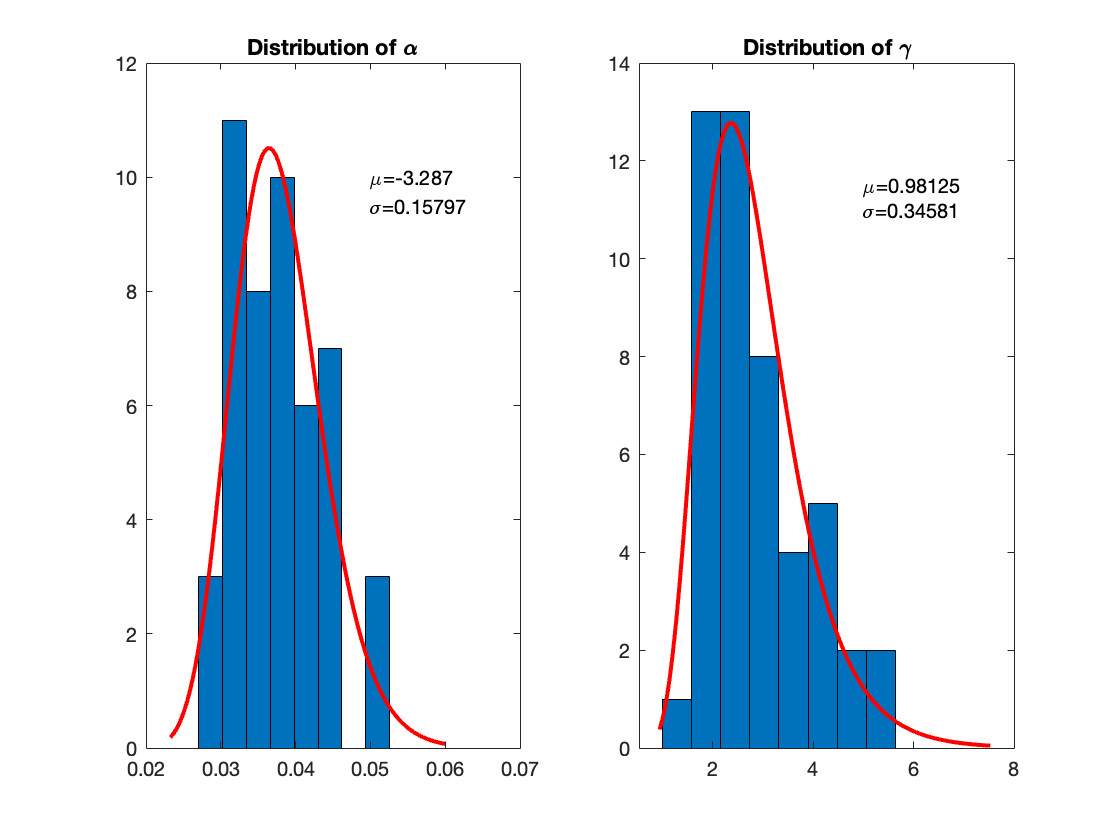}
\caption{Histograms (blue) and estimated  lognormal distributions (red) for the model parameters in the data set. Lognormal distribution is selected due to the positivity of the model parameters $(\alpha,\gamma)$.}\label{fig:dist}
\end{figure}

\begin{figure}[ht]
\centering 
\includegraphics[width=1.0\linewidth]{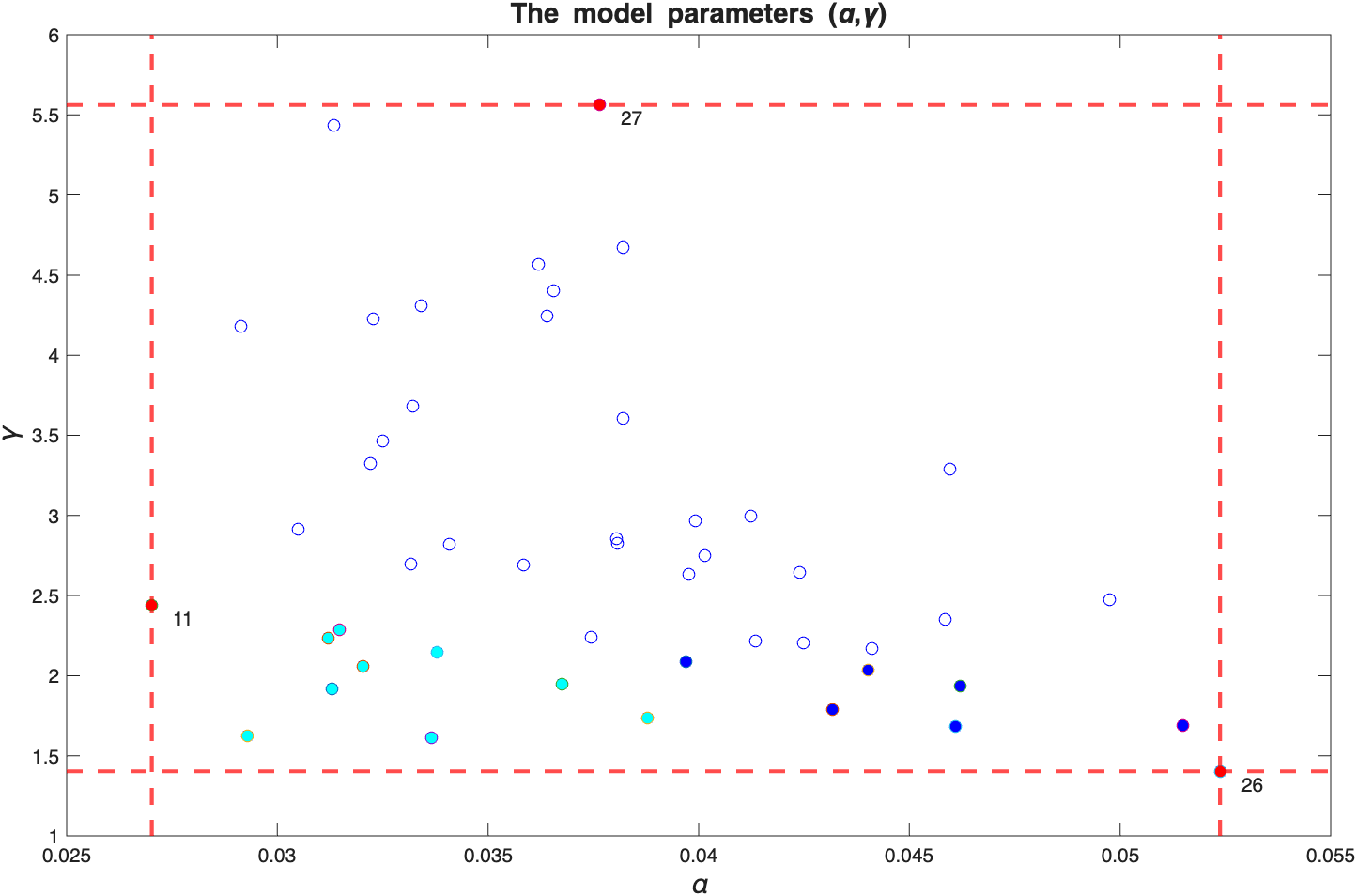}
\caption{The model parameter pairs $(\alpha,\gamma)$ in the dataset: $ \alpha_{\min}= 0.0270\le \alpha \le 0.0524=\alpha_{\max}$, $\gamma_{\min}=1.4030\le\gamma\le 5.5619=\gamma_{\max}$. \textcolor{black}{ The cases exhibiting extreme parameter values are marked with filled red circles and indicated by the Patient Identification Number. Models pointed out as infeasible in \cite{MP26} are marked with filled blue and cyan circles. Models with $\lambda_{\max}<\lambda^*$ and $T^*<T_{\max}$ are plotted in blue, whereas models with $\lambda_{\max}<\lambda^*$ and $T_{\max}<T^*$ are plotted in cyan.}}\label{fig:alpha_gamma}
\end{figure}

\section{Problem formulation}\label{sec:formulation}



In what follows, the continuous plant in ~\eqref{eq:1} is controlled by an impulsive output feedback, characterized by frequency and amplitude pulse modulation operators~\cite{GC98}. This control action can be described as an infinite sequence of Dirac delta-functions fed as input to the linear block~\eqref{eq:1}, $u(t)=\sum_{n=0}^{\infty}\lambda_n\delta(t-t_n)$, where the weights and firing times of the impulses depend on the nonlinear plant output $y$. Equivalently, the effect of impulses is represented by instantaneous jumps in the state vector, whose timing is governed by a difference equation. This leads to the following hybrid dynamics:
\begin{gather}
\dot x(t)=Ax(t),\;\;t\in(t_n,t_{n+1}),\notag\\ 
\quad  x(t_n^+) = x(t_n^-) +\lambda_n B, \quad x(0^-)=x_0,\label{eq:2}\\                                         
t_{n+1} =t_n+T_n, \quad t_0=0,\notag\\  
T_n =\bar\Phi(y(t_n)), \ \lambda_n=\bar F(y(t_n)),\notag 
\end{gather}
where  $n=0,1,\ldots$ and the output $y(t)$ is defined in~\eqref{eq:1}. 
The minus and plus in a superscript in~\eqref{eq:2} denote the left-sided and
right-sided limits, respectively. The instants $t_n$ are termed (impulse) firing times,
and $\lambda_n$ represents the corresponding impulse weight. 
Despite the jumps in~\eqref{eq:2}, $y(t)$ and $\bar y(t)$ remain continuous, since $\varphi(\cdot)$ is smooth and
 $CB=CAB=0$. Since $CA^2B\ne 0$,
 the linear block is a system of relative degree $3$.
 
The \emph{design degrees of freedom} of the impulsive controller in question are the frequency modulation function $\bar\Phi(\cdot)$ and the amplitude modulation function $\bar F(\cdot)$.
 
With $\circ$ denoting composition,  introduce the functions
\begin{equation}\label{eq.phi_f_nobar} 
\Phi(\cdot)\triangleq (\bar\Phi\circ\varphi)(\cdot), \quad F(\cdot)\triangleq (\bar F\circ\varphi)(\cdot).
\end{equation}

The following restrictions on the design degrees of freedom are imposed.
Both $F(\cdot)$ and $\Phi(\cdot)$ are assumed to be continuous and monotonic, with $F(\cdot)$ being nonincreasing
and $\Phi(\cdot)$ being nondecreasing on $[0,\infty)$. To guarantee boundedness of closed-loop solutions in~\eqref{eq:1},~\eqref{eq:2}, it is required that
\begin{equation}                             \label{eq:2a}
0<\Phi_1\le \Phi(\cdot)\le\Phi_2, \quad 0<F_1\le F(\cdot)\le F_2,
\end{equation}
where $\Phi_1$, $\Phi_2$, $F_1$, $F_2$ are constants.

\textcolor{black}{Under the assumptions made on the modulation functions, impulsive controller~\eqref{eq:2} enforces a negative feedback on the linear plant. When the output $\bar y(t)$ increases, the controller responds with less frequent (longer $T_n$) impulses of lower weight $\lambda_n$, thus reducing the output; when the output decreases, the control response is the opposite one. A mathematical rationale for this property is provided in Section~\ref{par:fb}.}

\paragraph*{\bf Control Problem} The problem at hand is to select the modulation functions $\bar\Phi(\cdot)$, $\bar F(\cdot)$ so that  closed-loop system~\eqref{eq:2} exhibits  an orbitally stable periodic solution with a predefined period $T>0$ and a given pulse weight 
$\lambda>0$, i.e., a solution with
$\lambda_n\equiv\lambda,\;\;T_n\equiv T$, for all $n$.

\section{Closed-loop dynamics and 1-cycles}\label{sec:nonlin-dynam}

Under the assumptions introduced in the previous section and with the plant nonlinearity $\varphi$ incorporated in the modulation functions $\Phi$ and $F$, closed-loop system~\eqref{eq:2} is identical to the Impulsive Goodwin's Oscillator (IGO)~\cite{MCS06,Aut09}, a hybrid mathematical model originally devised to describe pulsatile endocrine regulation. Denoting $X_n=x(t_n^-)$, the  state vector sequence of the IGO  obeys the impulse-to-impulse (or \emph{return}) map~\cite{Aut09,ZCM12b} as follows:
\begin{align}\label{eq:map}
    X_{n+1}&=Q(X_n),\\
    Q(\xi) &\triangleq \mathrm{e}^{A\Phi(C\xi)}\left( \xi+ F(C\xi)B \right).\nonumber
\end{align}
Between the firing instants, the continuous state trajectory on the interval $(t_n,t_{n+1})$ is uniquely defined by $X_n$ as
\begin{equation} \label{eq:1d}
x(t)=\e^{(t-t_n)A}(X_n+\lambda_n B),\quad t\in(t_n,t_{n+1}).
\end{equation}

\subsection{The fixed point and 1-cycle}

As shown in~\cite{Aut09}, the mapping $Q$ has a unique fixed point
\begin{equation}\label{eq:1-cycle}
    X=Q(X),
\end{equation}
for every pair of the nonlinear functions $F,\Phi$ in~\eqref{eq.phi_f_nobar} that are, respectively, nonincreasing and nondecreasing, and 
obey~\eqref{eq:2a}. This fixed point determines a special type of periodic solution, termed \emph{1-cycle}~\cite{ZM03,ZCM12b} and 
characterized by only one firing of the feedback in the (least) period. Denoting $\bar y_0\triangleq CX$, the 1-cycle is uniquely determined by the solution of~\eqref{eq:1-cycle} and obtained by substituting $x(t_n^-)=X_n=X$, $T_n=T\triangleq\Phi(\bar y_0)$, and
$\lambda_n=\lambda\triangleq F(\bar y_0)$ into~\eqref{eq:1d} (then, obviously, $X_{n+1}=Q(X)=X=X_n$, so that the hybrid solution is $T$-periodic).
 
Using the Opitz formula~\cite{E87,DeBoor2005} from matrix calculus, the solution of~\eqref{eq:1-cycle} for the given values of $\lambda$ and $T$ can be found analytically by using  divided differences. The first divided difference of a function $h(\cdot)$ is defined as
\[
h\lbrack x_1, x_2 \rbrack \triangleq \frac{h(x_1)-h(x_2)}{x_1-x_2},
\]
and higher-order divided differences are defined recursively by 
\[
h\lbrack x_0, \dots, x_k \rbrack = \frac{h\lbrack x_1, \dots, x_k\rbrack-h\lbrack x_0, \dots, x_{k-1}\rbrack }{x_k-x_0}.
\]
Here, for simplicity, only distinct points $x_i\ne x_j$ are considered; a general definition can be found in~\cite{DeBoor2005}.

A closed-form expression of the desired fixed point is provided in the next theorem.
\begin{theorem}\label{pro:fp}
Let $\lambda,T>0$ be fixed and $\mu(x)\triangleq\frac{1}{\e^{-x}-1}$. Then, the following statements are equivalent:
    \begin{enumerate}        
    \item $X$ and $\bar y_0=CX$ obey the equations
        \begin{gather}
        X= \lambda  \begin{pmatrix}
            \mu(-a_1T) \\ g_1{\color{black}T}\mu\lbrack -a_1T,-a_2T \rbrack \\ g_1g_2{\color{black}T^2}  \mu\lbrack -a_1T, -a_2T, -a_3T \rbrack 
        \end{pmatrix},\label{eq:fp_alpha}\\
        \Phi(\bar y_0)=T,\quad F(\bar y_0)=\lambda\label{eq:f-phi-correspondence},
        \end{gather}
    where $a_i,g_i$ are defined in~\eqref{eq:1+}.
    \item $X>0$ is the fixed point of the map $Q(\cdot)$ that corresponds to 1-cycle of the period $T$ and with the pulse weight $\lambda$.
    \end{enumerate}    
\end{theorem}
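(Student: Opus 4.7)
The plan is to exploit the two-step structure of the theorem: reduce the nonlinear fixed-point equation $X = Q(X)$ to a linear one by imposing the 1-cycle constraints \eqref{eq:f-phi-correspondence}, and then solve the linear equation in closed form via the Opitz matrix-function formula. A fixed point $X$ of $Q$ yields a 1-cycle of period $T$ and weight $\lambda$ precisely when $\Phi(CX) = T$ and $F(CX) = \lambda$, which is exactly \eqref{eq:f-phi-correspondence}; substituting these into $X = Q(X) = e^{A\Phi(CX)}(X + F(CX)B)$ collapses it to the linear relation
\begin{equation*}
X = e^{AT}(X + \lambda B).
\end{equation*}
Since $A$ is Hurwitz, $\rho(e^{AT}) < 1$, so $I - e^{AT}$ is invertible, and the unique solution is $X = \lambda\,(e^{-AT} - I)^{-1} B = \lambda\, \mu(AT)\, B$, with $\mu$ acting as a matrix function.

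Next, I would read off the first column of $\mu(AT)$ using the Opitz formula for functions of a lower bidiagonal matrix. For $M$ lower bidiagonal with diagonal $(d_1, d_2, d_3)$ and subdiagonal $(\ell_1, \ell_2)$, the first column of $f(M)$ equals $(f(d_1),\, \ell_1 f[d_1, d_2],\, \ell_1 \ell_2 f[d_1, d_2, d_3])^{\top}$. Applied to $M = AT$, i.e., $d_i = -\alpha v_i T$, $\ell_1 = g_1 T$, $\ell_2 = g_2 T$, and $f = \mu$, this delivers the expression in \eqref{eq:fp_alpha} after consolidating the off-diagonal gains $g_1 = \alpha v_1$ and $g_1 g_2 = \alpha^3 v_1 v_2 v_3$ as the coefficients of the divided differences.

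For positivity of $X$, the first coordinate $\lambda\mu(-\alpha v_1 T) > 0$ is immediate since $\mu$ is positive on the negative real axis. For the remaining coordinates the cleanest argument sidesteps divided differences: the convergent series $X = \lambda\sum_{k \ge 1} e^{kAT} B$ has entrywise nonnegative terms because $A$ is Metzler (so $e^{tA} \ge 0$ for all $t \ge 0$), and the chain coupling via $g_1, g_2 > 0$ renders the $(2,1)$ and $(3,1)$ entries of $e^{AT}$ strictly positive, whence $X_2, X_3 > 0$. The reverse implication (1) $\Rightarrow$ (2) is then pure verification: the explicit $X$ of \eqref{eq:fp_alpha} satisfies $X = e^{AT}(X + \lambda B)$ by running the Opitz identity backwards, and together with \eqref{eq:f-phi-correspondence} this gives $X = Q(X)$ and hence the 1-cycle.

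The main obstacle I anticipate is the bookkeeping in the Opitz step: matching the scalar coefficients $\alpha v_1$ and $\alpha^3 v_1 v_2 v_3$ with the correct divided-difference arguments while keeping track of the $T$-scaling that enters through $M = AT$. An alternative route that avoids invoking Opitz altogether is to solve $\dot x = Ax$, $x(0) = B$, directly using the partial-fraction decomposition of $W(s)$ in \eqref{eq:lin_NMB} — the coordinates of $e^{At}B$ are then explicit linear combinations of $e^{-a_i t}$ — evaluate at $t = kT$, sum the geometric series $\sum_{k \ge 1} e^{-a_i kT} = \mu(-a_i T)$, and rearrange into the divided-difference form. This direct computation is routine but tedious, and it provides an independent check on the Opitz-based derivation.
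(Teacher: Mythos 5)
Your proposal is correct and its skeleton coincides with the paper's proof: impose \eqref{eq:f-phi-correspondence} to collapse $X=Q(X)$ to the linear relation $X=\e^{AT}(X+\lambda B)$, invert $I-\e^{AT}$ (legitimate since $A$ is Hurwitz) to get $X=\lambda(\e^{-AT}-I)^{-1}B=\lambda\,\mu(TA)B$, and read off the components via the Opitz formula for the lower-bidiagonal matrix $TA$; the converse is, as you say, the same chain of equivalences run backwards. The one place where you genuinely depart from the paper is the positivity of $X$. The paper derives it from its Proposition on convexity (the generalized mean-value form of divided differences applied to $\mu$, which is shown to be positive, increasing and convex on $(-\infty,0)$, so that $\mu(TA)B>0$), whereas you expand $(\e^{-AT}-I)^{-1}=\sum_{k\ge 1}\e^{kAT}$ and invoke the Metzler property of $A$ together with the strict positivity of the $(2,1)$ and $(3,1)$ entries of $\e^{AT}$ forced by the chain gains $g_1,g_2>0$. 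Your route is more elementary and self-contained for this theorem; the paper's convexity machinery is heavier but is not wasted, since the same Proposition and its corollaries are reused to prove Lemma~\ref{lem.signs_DJ} ($D<0$, $J>0$, via the functions $\nu$ and $\varrho$), which your series argument would not deliver as directly. One bookkeeping caveat: applying Opitz to $TA$ produces subdiagonal weights $g_1T$ and $g_1g_2T^2$, so the factors of $T$ and $T^2$ must be tracked when matching against the coefficients $\alpha v_1$ and $\alpha^3v_1v_2v_3$ displayed in \eqref{eq:fp_alpha}; you flag this yourself, and it does not affect the validity of the argument.
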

\begin{proof}
    See Appendix~\ref{app.thm1}.
\end{proof}
\vskip0.2cm
Substituting the expressions for $a_i,g_i$ from~\eqref{eq:1+}, Fig.~\ref{fig:fp_alpha} illustrates the dependence of the fixed point $X$ in~\eqref{eq:fp_alpha} on $\alpha$ for a given pair of $(\lambda,T)$. It can be shown that for each $T>0$ the components of $X$ are decreasing functions of $\alpha$.
\begin{figure}[ht]
\centering 
\includegraphics[width=1.0\linewidth]{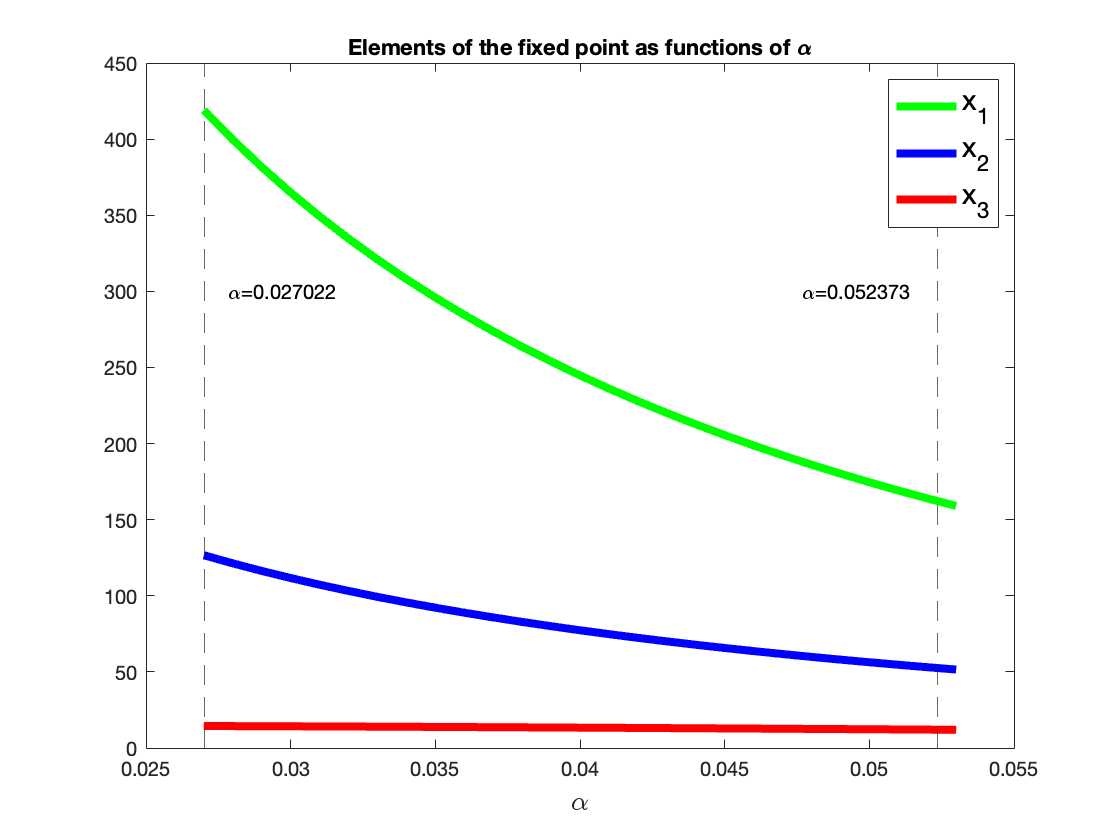}
\caption{Fixed point coordinates as functions of $\alpha\in \lbrack \alpha_{\min}, \alpha_{\max} \rbrack$ {for $\lambda=200$ and $T=20$}. 
}\label{fig:fp_alpha}
\end{figure}

Theorem~\ref{pro:fp} suggests a method for designing the nonlinearities $F$ and $\Phi$ in such a way as to guarantee the existence of a 1-cycle with predefined parameters $\lambda,T$ for a given value of $\alpha$. It suffices to calculate the fixed point $X$ according to~\eqref{eq:fp_alpha}, and then find $F,\Phi$ in such a way that~\eqref{eq:f-phi-correspondence} holds. However, ensuring a sustained 1-cycle requires orbital stability. A complete controller design algorithm is provided in Section~\ref{sec:design}.

\subsection{The Jacobian of the return map at the fixed point}

To sustain the desired  periodic solution in  closed-loop system~\eqref{eq:1},~\eqref{eq:2}, the 1-cycle has to be orbitally stable, which property is guaranteed by stability of the corresponding fixed point. To test stability of the 1-cycle,  the Jacobian of the pointwise map $Q(\cdot)$ is evaluated at the fixed point $X$. 
A straightforward computation shows~\cite{MPZ23} that
    \begin{equation}\label{eq:Q_prime_affine}
        \begin{gathered}
        Q^\prime(X)= \e^{A\Phi(\bar y_0)}+ \left( F^\prime(\bar y_0)J+\Phi^\prime(\bar y_0)D\right)C,\\
        \,D\triangleq AX=F(\bar y_0)A(\e^{-A\Phi(\bar y_0)}-I)^{-1}B,\\
        J\triangleq\e^{A\Phi(\bar y_0)}B.
        \end{gathered}
    \end{equation}
The 1-cycle corresponding to the fixed point $X$ is orbitally stable when the matrix $Q^\prime(X)$ is Schur.

Now the control problem defined in Section~\ref{sec:formulation} can be reformulated in terms of  fixed point.

\paragraph*{\bf Control Problem (Reformulation)} 
Given  plant~\eqref{eq:1} and the desired parameters of the periodic solution, $\lambda$ and $T$, find the modulation functions $\bar\Phi(\cdot)$ and $\bar F(\cdot)$ such that  fixed point~\eqref{eq:fp_alpha} solves~\eqref{eq:f-phi-correspondence} 
and renders $Q^\prime(X)$ Schur-stable.
\vskip0.1cm

\paragraph{Trivial Solution: Periodic Open-Loop Control}
It can be noted that stability of the 1-cycle can be achieved without  impulsive feedback~\eqref{eq:2}, by choosing constant (i.e. output-independent) modulation functions $F(\cdot) \equiv \lambda$ and $\Phi(\cdot) \equiv T$. In this case, $Q'(X) = \e^{TA}$ by~\eqref{eq:Q_prime_affine}. Since $A$ is Hurwitz, $\e^{TA}$ is Schur stable, with spectral radius $\rho(\e^{AT}) = \e^{-a_1 T} = \e^{-\alpha v_1 T}$, using $0 < a_1 < a_2 < a_3$.
This configuration effectively drives~\eqref{eq:1} in open-loop mode with a train of equidistant impulses of constant weight.  

However, impulsive feedback plays a crucial role in enhancing convergence to the desired periodic solution in the presence of deviations. In particular, nonzero slopes $F'(\bar y_0)$ and $\Phi'(\bar y_0)$ allow the spectral radius $\rho(Q'(X))$ to be reduced.

\paragraph{Relation to Static Output Stabilization}

Notice that selecting $F^\prime(\bar y_0)$ and $\Phi^\prime(\bar y_0)$ to stabilize the fixed point $X$ is equivalent to finding a gain $K$ that renders the matrix
\begin{equation}\label{eq:jacobian}
Q^\prime(X) = A_\Phi + W K C,
\end{equation}
Schur stable, where we denote
\[
A_\Phi\triangleq\e^{A\Phi(\bar y_0)}, W\triangleq\begin{bmatrix}
    J &D
\end{bmatrix}, K^\top\triangleq \begin{bmatrix}
  F^\prime(\bar y_0)   & \Phi^\prime(\bar y_0)
\end{bmatrix}.
\]
This problem corresponds to stabilization of the system
\begin{align*}
    x_d(t+1)&=A_d x_d(t)+ B_d u_d(t),\\
    y_d(t)&= C_dx_d(t),  
\end{align*}
by the output feedback $u_d(t)= K_d y_d(t)$. The closed-loop system is stable when the matrix $A_d+B_d K_d C_d$ is Schur. 
The problem of finding such a matrix $K_d$ is known as (static) output feedback stabilization; see~\cite{CLS98} for a review. 
An additional constraint imposed by the impulsive Goodwin oscillator structure is that the elements of the matrix $K$ are sign-definite,  $K_1 \le 0$ and $K_2 \ge 0$, in view of the monotonicity properties of $F$ and $\Phi$.

It is known that the general static output feedback stabilization problem reduces to the feasibility of a nonconvex quadratic matrix inequality~\cite{CLS98}. The special structure of the three-dimensional system considered here allows an \emph{analytic} characterization of all pairs of slope tangents $(\Phi'(\bar y_0), F'(\bar y_0))$ that ensure orbital stability of the 1-cycle corresponding to the parameters $F(\bar y_0) = \lambda$ and $\Phi(\bar y_0) = T$. Such a characterization is provided by Theorem~\ref{th:stability} in the next section.

\paragraph{Impulsive Control as a Negative Feedback}\label{par:fb}


The stability criterion derived in the next section is based on the following property of the pulse-modulated feedback~\eqref{eq:2}, which is of independent interest.
\begin{lemma}\label{lem.signs_DJ}
The vectors $D$ and $J$ in~\eqref{eq:Q_prime_affine} obey the inequalities\footnote{All inequalities involving vectors are understood elementwise.}
\[
D<0\;\;\text{and}\;\; J>0.
\]
\end{lemma}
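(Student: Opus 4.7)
The plan is to treat the two inequalities $J>0$ and $D<0$ separately, exploiting that $A$ in \eqref{eq:1+} is Metzler, Hurwitz, and lower bidiagonal with strictly positive subdiagonal gains $g_1,g_2$. Throughout, I would denote $T:=\Phi(\bar y_0)>0$, $b_i:=a_iT$ for $i=1,2,3$, $H(x):=1/(\mathrm{e}^x-1)=\mu(-x)$ and $h(x):=xH(x)=x/(\mathrm{e}^x-1)$. The positivity of $J$ will follow from Metzler-exponential positivity, while the sign of each component of $D$ will reduce to monotonicity and convexity of the scalar function $h$ on the points $b_1<b_2<b_3$.

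The inequality $J=\mathrm{e}^{AT}B>0$ is the easier one. Because $A$ is Metzler, $\mathrm{e}^{At}\geq 0$ entrywise for all $t\geq 0$, and $J$ is precisely the first column of $\mathrm{e}^{AT}$, i.e., the value at $t=T$ of the cascade $\dot x_1=-a_1x_1$, $\dot x_2=g_1x_1-a_2x_2$, $\dot x_3=g_2x_2-a_3x_3$ launched from $x(0)=B=[1,0,0]^\top$. Recursive integration gives $J_1=\mathrm{e}^{-a_1T}>0$, then $J_2=g_1\int_0^T\mathrm{e}^{-a_2(T-s)}J_1(s)\,ds>0$ because $g_1>0$ and the integrand is strictly positive on $(0,T)$, and likewise $J_3>0$ from $J_2>0$ and $g_2>0$. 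Hence $J>0$ componentwise.

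For $D=AX<0$, the first component is immediate: $D_1=-a_1X_1<0$ since $X_1>0$ by Theorem~\ref{pro:fp}. For the remaining two, I would use the series $X=\lambda\sum_{k=1}^{\infty}\mathrm{e}^{kAT}B$ (absolutely convergent because $A$ is Hurwitz), compute $[\mathrm{e}^{kAT}]_{ij}$ for the chain-structured $A$ by partial fractions of $(sI-A)^{-1}B$, and evaluate the geometric sums $\sum_{k\geq 1}\mathrm{e}^{-a_ikT}=H(b_i)$. Elementary algebra then collapses $D_2=g_1X_1-a_2X_2$ and $D_3=g_2X_2-a_3X_3$ into
\[
D_2=\lambda g_1\,h[b_1,b_2],\qquad D_3=-\lambda g_1g_2 T\,h[b_1,b_2,b_3].
\]
By the mean-value (Hermite-Genocchi) representation of divided differences, the required signs $D_2,D_3<0$ follow once $h$ is known to be strictly decreasing and strictly convex on $(0,\infty)$, yielding $h[b_1,b_2]<0$ and $h[b_1,b_2,b_3]>0$. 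Monotonicity is immediate from $h(x)=1/\sum_{k\geq 0}x^k/(k+1)!$, whose denominator is strictly increasing in $x$. Strict convexity is the only nontrivial ingredient and the main obstacle of the whole proof: one computes
\[
h''(x)=\frac{\mathrm{e}^x\bigl[(x-2)\mathrm{e}^x+x+2\bigr]}{(\mathrm{e}^x-1)^3},
\]
and then verifies that $r(x):=(x-2)\mathrm{e}^x+x+2$ satisfies $r(0)=r'(0)=0$ together with $r''(x)=x\mathrm{e}^x>0$ on $(0,\infty)$, so that two successive integrations force $r(x)>0$ there and hence $h''(x)>0$ on $(0,\infty)$. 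The remaining manipulations are routine bookkeeping built on Theorem~\ref{pro:fp} and the positivity of Metzler exponentials.
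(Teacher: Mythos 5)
Your proof is correct and follows essentially the same route as the paper: both arguments reduce the sign of $D$ to strict monotonicity and convexity of the scalar function $x/(\e^x-1)$ applied through the mean-value representation of divided differences over the nodes $a_iT$ furnished by the bidiagonal structure of $A$ (the Opitz formula) — indeed the paper works with the reflection $\nu(x)=x/(1-\e^{-x})$ on $(-\infty,0)$, so your computation of $h''$ and of $r(x)=(x-2)\e^x+x+2$ is literally the paper's Corollary~\ref{cor.mu-convex} under the substitution $x\mapsto-x$. The only cosmetic differences are that you obtain $J>0$ by integrating the Metzler cascade rather than reusing the divided-difference machinery for $f=\exp$, and you reach the component formulas for $D$ (which I verified are correct) via a geometric series instead of the identity $D=-T^{-1}\lambda\nu(TA)B$.
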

\begin{proof}
    See Appendix~\ref{app.lemma-proof}.
\end{proof}

Together with the monotonicity assumptions on the modulation functions, Lemma~\ref{lem.signs_DJ} entails
\begin{equation}\label{eq:negative_feedback}
    JF^\prime(\bar y_0) + D\Phi^\prime(\bar y_0) <0,
\end{equation}
for all values of the tangent slopes $F^\prime(\bar y_0)\leq 0$ and $\Phi^\prime(\bar y_0)\geq 0$, one of which is nonzero. Recalling the analogy with the output feedback stabilization problem, inequality~\eqref{eq:negative_feedback} highlights the role of the pulse-modulated feedback~\eqref{eq:2} as negative feedback with respect to the linear block output $\bar y(t)$. This is a principal property of  closed-loop system~\eqref{eq:1},~\eqref{eq:2}, since all signals involved are positive, while the negative feedback principle is implemented in a well-defined mathematical sense. In view of the underlying endocrine regulation problem modeled by the IGO, the impulsive control approach considered here provides a formal explanation of how biofeedback operates. 

From~\eqref{eq:Q_prime_affine} and~\eqref{eq:negative_feedback}, it also follows that increasing $\Phi^\prime(\bar y_0)$ and decreasing $F^\prime(\bar y_0)$ may eventually lead to loss of stability, as some eigenvalues of $Q^\prime(X)$ increase in absolute value. While no analytic expression for the spectral radius of $Q'(X)$ as a function of the tangent slopes is available,  numerical analysis is provided in Subsection~\ref{subsec.rho}.

\subsection{The Schur Stability Criterion}\label{subsec.stab}

Although the expression for the spectral radius of the Jacobian matrix~\eqref{eq:Q_prime_affine} is complicated, an analytic necessary and sufficient condition on the slopes of the modulation functions that ensures $Q'(X)$ is Schur stable does exist. Unlike the standard Jury and Schur–Cohn criteria for stability of a general third-order polynomial, the stability conditions derived below are \emph{linear} with respect to the slope coefficient $F^\prime(\bar y_0)$.
Due to the structure of the Jacobian in~\eqref{eq:Q_prime_affine}, consider the following. 
\paragraph*{\bf Stability Problem} Find all real pairs $(\xi\leq 0,\eta\geq 0)$ such that $\mathscr{Q}(\xi,\eta)$ is Schur stable, where
\begin{equation}\label{eq:Q_general}
        \begin{gathered}
        \mathscr{Q}(\xi,\eta)\triangleq\e^{AT}+ \left( \xi J+\eta D\right)C.       
        \end{gathered}
    \end{equation}

To formulate stability conditions, introduce the functions
\begin{align}
\psi(s|\xi,\eta)&\triangleq\frac{\det\left(sI-\mathscr{Q}(\xi,\eta)\right)}{\det(sI-\e^{AT})}\notag\\
&=1-C(sI-\e^{AT})^{-1}(\xi J+\eta D),\label{eq.psi}\\
c(\eta)\triangleq&\e^{-(a_1+a_2+a_3)T}\left(1+\eta\lambda CA(I-\e^{AT})^{-1}B\right).\label{eq.c0}
\end{align}
Notice that the poles of $\psi(\cdot|\xi,\eta)$ are the real numbers $s=\e^{-a_iT}$, $i=1,2,3$, according to~\eqref{eq:1+}. 
In the equations to follow, dependence on $(\xi,\eta)$ in $\mathscr{Q}$ and $\psi$ is dropped for brevity when it does not lead to confusion.
Using the definitions of $J$ and $D$ in~\eqref{eq:Q_prime_affine}, it can be shown that $\psi(0|\xi,\eta)$ does not depend on $\xi$, since $C\e^{-AT}J=CB=0$. Also, $c(\eta)=\psi(0|\xi,\eta)\e^{-(a_1+a_2+a_3)T}$ for all pairs $(\xi,\eta)$.

\begin{theorem}\label{th:stability}
Let $\xi\leq 0$ and $\eta\geq 0$ be arbitrary real constants.
\textbf{Non-critical case:} 
If $c(\eta)\ne 0$, then $\mathscr{Q}=\mathscr{Q}(\xi,\eta)$ is Schur stable if and only if the three inequalities hold as follows:
\begin{gather}
c(\eta)>-1,
\label{eq.necess-stab1}\\
\psi(-1)=1+C(I+\e^{AT})^{-1}\left(\xi J+\eta D\right)>0,\label{eq.necess-stab2}\\
c(\eta)\psi(c(\eta)|\xi,\eta)>0.\label{eq.necess-stab3}
\end{gather}
In this case, the spectral radius of $\mathscr{Q}$ is greater than $|c(\eta)|$.

\textbf{Critical case:} In the case where $c(\eta)=0$, 
$\mathscr{Q}(\xi,\eta)$ is Schur stable if and only if~\eqref{eq.necess-stab2} holds and 
\begin{equation}\label{eq.necess-stab3+}
|C\e^{-2AT}(\xi J+\eta D)|<\e^{(a_1+a_2+a_3)T}.
\end{equation}
\end{theorem}
\begin{proof}
    See Appendix~\ref{app.thm2}. 
\end{proof}
Notice that the left-hand side of~\eqref{eq.necess-stab3} is \emph{linear} in $\xi$ and rational in $\eta$. Hence, unlike the standard Schur stability conditions for matrices~\cite{MPZ23a,FGL98}, the conditions of Schur stability for  Jacobian~\eqref{eq:Q_prime_affine} prove to be \emph{linear} in $\xi$. Similarly,~\eqref{eq.necess-stab3+} reduces to two inequalities that are linear in $\xi$.

The following corollary is straightforward by noticing that \[
Q'(X)=\mathscr{Q}(F'(\bar y_0),\Phi'(\bar y_0)).
\]
\begin{corollary}\label{col:stab_F_Phi}
The Jacobian~\eqref{eq:jacobian} corresponding to a 1-cycle is Schur stable if and only if $\xi=F'(\bar y_0)$ and
$\eta=\Phi'(\bar y_0)$ satisfy the stability conditions of Theorem~\ref{th:stability}. 
\end{corollary}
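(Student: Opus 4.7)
The plan is to recognize that Corollary~\ref{col:stab_F_Phi} is a direct specialization of Theorem~\ref{th:stability} to the particular values of $(\xi,\eta)$ dictated by the modulation functions evaluated at the fixed point. The bridge between the two statements is the algebraic identity $Q'(X)=\mathscr{Q}(F'(\bar y_0),\Phi'(\bar y_0))$, which is obtained by comparing~\eqref{eq:Q_prime_affine} with the definition~\eqref{eq:Q_general}: both express the Jacobian as $\e^{AT}$ plus the rank-one perturbation $(\xi J+\eta D)C$ with the same vectors $J$ and $D$. In particular, note that $\Phi(\bar y_0)=T$ by construction of the 1-cycle, so the matrix exponentials match.

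Having established this identification, the only subtle point is to check that the sign constraints $\xi\leq 0$ and $\eta\geq 0$ assumed in Theorem~\ref{th:stability} are automatically satisfied. These follow immediately from the standing assumptions on the modulation functions stated in Section~\ref{sec:formulation}: $F(\cdot)$ is non-increasing, yielding $F'(\bar y_0)\leq 0$, and $\Phi(\cdot)$ is non-decreasing, yielding $\Phi'(\bar y_0)\geq 0$. Hence $(\xi,\eta)=(F'(\bar y_0),\Phi'(\bar y_0))$ falls inside the parameter region for which Theorem~\ref{th:stability} characterizes Schur stability.

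Putting the two observations together, Schur stability of the Jacobian~\eqref{eq:jacobian} is equivalent to Schur stability of $\mathscr{Q}(\xi,\eta)$ at $(\xi,\eta)=(F'(\bar y_0),\Phi'(\bar y_0))$, which Theorem~\ref{th:stability} resolves by the non-critical criterion~\eqref{eq.necess-stab1}--\eqref{eq.necess-stab3} when $c_0(\Phi'(\bar y_0))\ne 0$ and by the critical criterion~\eqref{eq.necess-stab2},\eqref{eq.necess-stab3+} otherwise.

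No real obstacle is anticipated, as the corollary amounts to a relabeling of variables together with an appeal to the monotonicity hypotheses on the modulation functions; this is why the excerpt labels it \emph{straightforward}. The only item worth verifying carefully is that the expressions for $D$ and $J$ in~\eqref{eq:Q_prime_affine}, which involve $\Phi(\bar y_0)$ and $F(\bar y_0)$, reduce under the 1-cycle relations $\Phi(\bar y_0)=T$, $F(\bar y_0)=\lambda$ to exactly the vectors appearing in~\eqref{eq.aux1}, so that $\mathscr{Q}(\xi,\eta)$ and $Q'(X)$ genuinely coincide.
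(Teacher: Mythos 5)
Your proposal is correct and follows exactly the reasoning the paper leaves implicit (it offers no proof beyond calling the corollary ``straightforward''): the identification $Q'(X)=\mathscr{Q}(F'(\bar y_0),\Phi'(\bar y_0))$ via~\eqref{eq:Q_prime_affine},~\eqref{eq:Q_general} and the 1-cycle relations $\Phi(\bar y_0)=T$, $F(\bar y_0)=\lambda$, plus the observation that the monotonicity assumptions on the modulation functions guarantee $\xi\leq 0$ and $\eta\geq 0$. Nothing is missing.
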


\subsection{Local Convergence Rate}\label{subsec.rho}

The eigenvalues of the Jacobian $Q'(X)$ (that is, the multipliers of the fixed point) define the convergence rate to the periodic solution and the dynamical character of the transients in vicinity of the fixed point. In dosing applications, to avoid overdosing, it is desirable to achieve monotone convergence to the steady-state conditions. With respect to (local) convergence to the desired 1-cycle, it is then required that the fixed point has to possess positive  multipliers. The property can be demonstrated by diagonalizing the Jacobian via a state transformation since it describes the linearized dynamics of the discrete map $Q(\cdot)$ in~\eqref{eq:map}.

By making use of~\eqref{eq:Q_prime_affine},
the problem of minimizing the spectral radius  of the Jacobian by selecting $K$ in~\eqref{eq:jacobian} is
\[
K^*=\arg\min_K \rho( A_\Phi+WKC),
\]
where $\rho(\cdot)$ denotes the spectral radius. The problem of numerically minimizing the spectral radius of a nonsymmetric affine matrix function  is considered in, e.g.,~\cite{OW88}. It is both nonconvex and nonsmooth as the eigenvalues are generally not differentiable. Theorem~\ref{th:stability} implies that the spectral radius cannot be reduced below $|c(\Phi'(\bar y_0))|$. Analyzing  definition~\eqref{eq.c0}, one notices that the factor $\e^{-(a_1+a_2+a_3)T}=\e^{-\alpha (v_1+v_2+v_3)T}$ is the larger, the smaller $\alpha>0$ is. \textcolor{black}{Therefore, patient models with smaller values of $\alpha$ will be more challenging to control to the desired periodic solution than those with higher values of the parameter.} However, it can be shown (Corollary~\ref{cor.rho-convex} in the Appendix) that $CA(I-\e^{AT})^{-1}B<0$; thus, by choosing $\Phi'(\bar y_0)>0$ large, one can decrease the value of $|c(\eta)|$ (and, as shown numerically below, as well the spectral radius).

\paragraph{Amplitude modulation} To obtain a better insight into the spectral properties of $Q^\prime(\cdot)$ and how they depend on the impulsive controller, consider a special case of amplitude modulation that is obtained from~\eqref{eq:2} by letting $\Phi(z)\equiv T$. Then the Jacobian takes the form of
\begin{equation}\label{eq:jacobian-amplitude}
Q^\prime_F(X)=\e^{A\Phi(\bar y_0)}+F^\prime(\bar y_0)JC=\e^{AT}(I+F'(\bar y_0)BC).
\end{equation}
Stability condition~\eqref{eq.necess-stab1} is automatically satisfied when $\eta=\Phi'(\bar y_0)=0$. Furthermore, $c(0)=\e^{-(a_1+a_2+a_3)T}$ is  very small for reasonable choices of $T$ and, for this reason, 
\[
\psi(c(0)|\xi,0)\approx\psi(0|\xi,0)=1+\xi C\e^{-TA}J=1+\xi CB=1>0,
\]
so that~\eqref{eq.necess-stab3} typically holds unless $|F'(\bar y_0)|$ is very large. 
The most restrictive assumption is~\eqref{eq.necess-stab2}, which requires that
\[
\frac{-1}{C(I+\e^{AT})^{-1}J}<\xi=F'(\bar y_0)<0.
\]

The coefficients of the characteristic polynomial (see Proposition~\ref{pr:amplitude_modulation} in the Appendix~\ref{app:E}) provide information on the eigenvalues $s_1,s_2,s_3$, since $\gamma_1=\mathrm{Tr}~Q^\prime_F(X)=\sum_{i=1}^3 s_i$ and $\gamma_3=\det Q^\prime_F(X)=\prod_{i=1}^3 s_i$. Apparently, the product of the eigenvalues of the Jacobian is independent of the amplitude modulation feedback, and a decrease in one of the eigenvalues will be accompanied with a rise in the other ones. The sum of the eigenvalues is an affine function of $F^\prime(\bar y_0)$. 


To maximize the convergence rate to the desired solution of the dynamics linearized at the fixed point of  closed-loop system~\eqref{eq:1},~\eqref{eq:2}, one seeks the value of $F^\prime(\bar y_0)$ that minimizes the spectral radius of  $Q^\prime_F(X)$. Fig.~\ref{fig:abs_eig_F} shows the absolute values of the eigenvalues of $Q^\prime_F(X)$ as a function of $F^\prime(\bar y_0)$. Interestingly,  the spectral radius is minimized at the \emph{double multiplier} point, where two real multipliers of $Q^\prime_F(X)$  
merge and then split into a complex-conjugate pair. As shown in Appendix~\ref{app:E}, in this case the eigenvalues are found from the coefficients of the characteristic polynomial, so the spectral radius (conjectured to be minimal) can be evaluated explicitly.

\begin{figure}[ht]
\centering 
\includegraphics[width=0.7\linewidth]{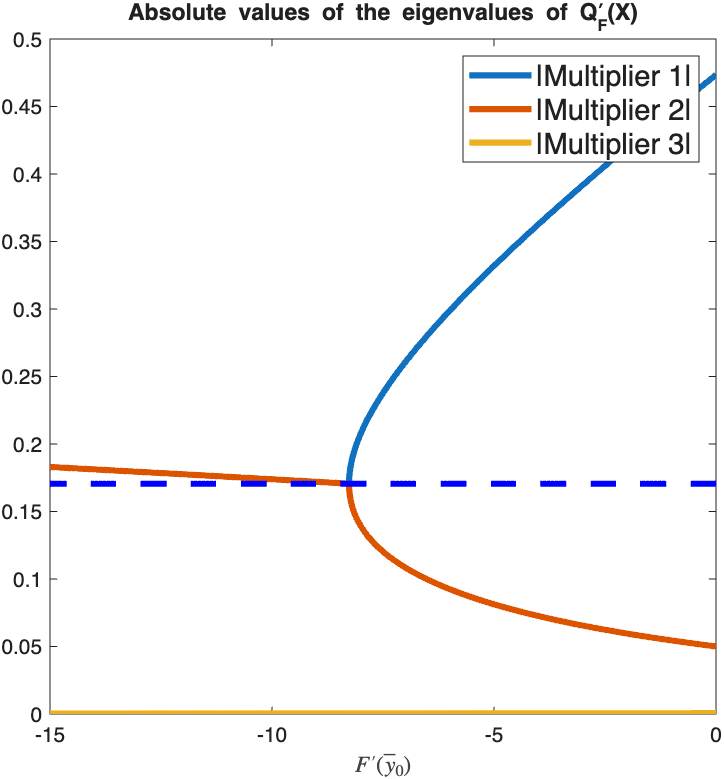}
\caption{Absolute values of the eigenvalues of $Q^\prime_F(X)$ as function of $F^\prime(\bar y_0)$. The minimal spectral radius is depicted by dashed line. The absolute values of two eigenvalues (a complex pair) coincide after the double multiplier point. The third eigenvalue (yellow line) is small in comparison with the other two and slowly decreases with the decreasing $F^\prime(\bar y_0)$. 
}\label{fig:abs_eig_F}
\end{figure}
\begin{figure}[ht]
\centering 
\includegraphics[width=0.9\linewidth]{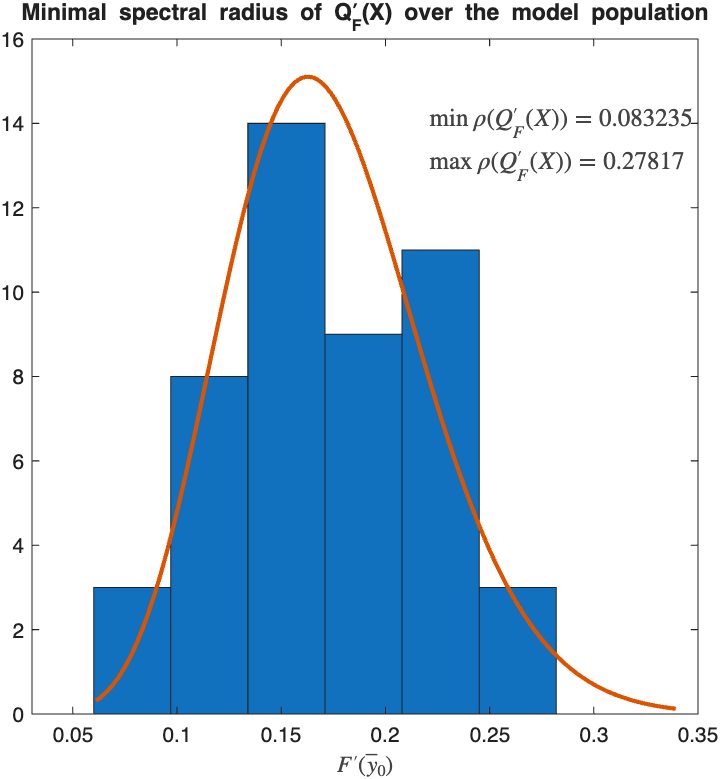}
\caption{Histogram of minimal spectral radii of $Q^\prime_F(X)$ over the NMB model population. An approximation with a Beta distribution is provided for reference. 
}\label{fig:hist_rho_F}
\end{figure}
\begin{figure}[ht]
\centering 
\includegraphics[width=1.0\linewidth]{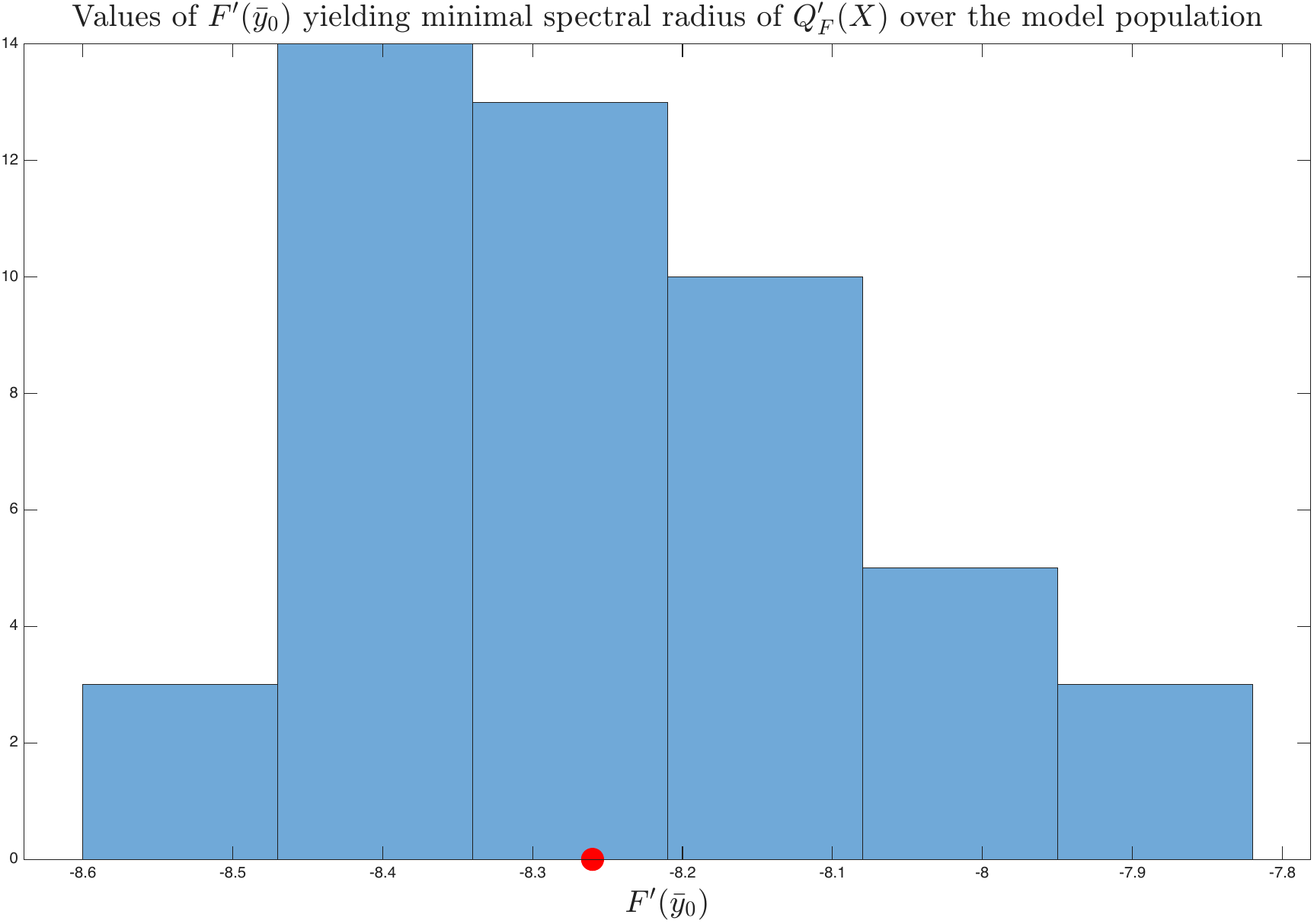}
\caption{Histogram of values of $F^\prime(\bar y_0)$ yielding minimal spectral radius of $Q^\prime_F(X)$ over the NMB model population. The optimal value for the population mean  of $F^\prime(\bar y_0)=-8.2600$  is marked with red circle. 
}\label{fig:hist_opt_K_F}
\end{figure}

The minimal spectral radius of $Q^\prime_F(X)$ varies significantly over the considered model population, Fig.~\ref{fig:hist_rho_F}. Notably, this does not result in a wide spread of the optimal values of $F^\prime(\bar y_0)$ at which the minimal spectral radius is achieved. The optimal value of $F^\prime(\bar y_0)$  calculated for the population mean value of $\alpha$ appears approximately in the middle of the interval, see Fig.~\ref{fig:hist_rho_F}.


\paragraph{Frequency modulation} Similarly to the case of pure amplitude modulation above, consider 
\[
Q^\prime_\Phi(X)=\e^{AT}+\Phi^\prime(\bar y_0)DC.
\]
This type of impulsive feedback is obtained from~\eqref{eq:2} by assuming $F(\bar y_0)\equiv \lambda$. The slope of the frequency modulation function has to be positive in order to  enforce sparser drug administration intervals for an elevated output. 
Taking into account the expression for $D$ in~\eqref{eq:Q_prime_affine},
the Jacobian is found as
\[
Q^\prime_\Phi(X)=\e^{AT}{(I-\e^{AT})}^{-1}(I-\e^{AT}+\lambda \Phi^\prime(\bar y_0) ABC).
\]

Similar to the case of amplitude modulation, the fastest (local) convergence to the 1-cycle (i.e. the smallest spectral radius of $Q^\prime_\Phi$) is achieved at the double multiplier point, where the multipliers become complex; see Fig.~\ref{fig:abs_eig_Phi}.
\begin{figure}[ht]
\centering 
\includegraphics[width=0.9\linewidth]{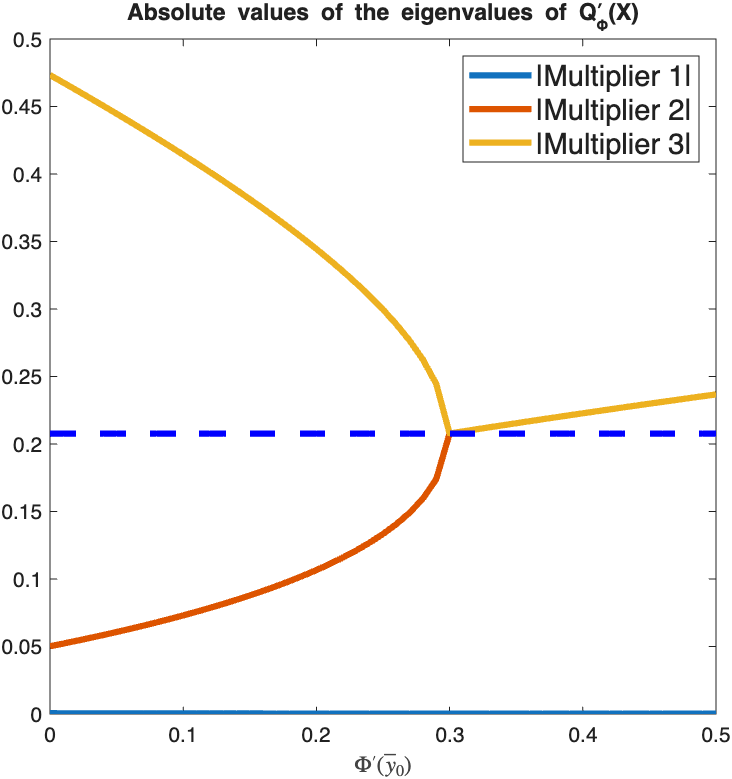}
\caption{Absolute values of the eigenvalues of $Q^\prime_\Phi(X)$ as a function of $\Phi^\prime(\bar y_0)$. The population mean value is assumed for $\alpha$. The minimal spectral radius is depicted by the dashed line. 
}\label{fig:abs_eig_Phi}
\end{figure}

\begin{figure}[ht]
\centering 
\includegraphics[width=0.7\linewidth]{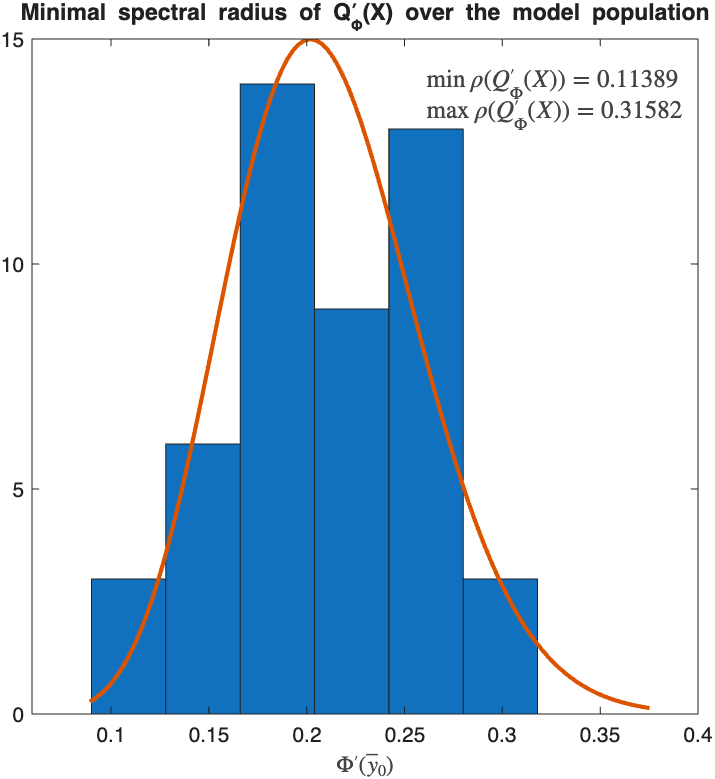}
\caption{Histogram of minimal spectral radii of $Q^\prime_\Phi(X)$ over the NMB model population. An approximation with a Beta distribution is provided for reference. 
}\label{fig:hist_rho_Phi}
\end{figure}

\begin{figure}[ht]
\centering 
\includegraphics[width=0.7\linewidth]{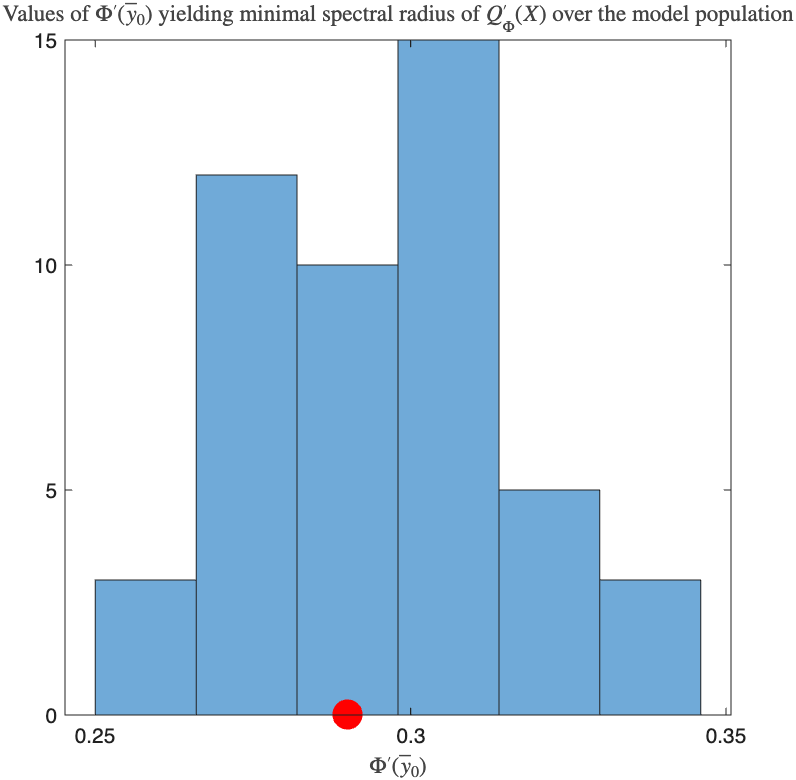}
\caption{Histogram of values of $\Phi^\prime(\bar y_0)$ yielding minimal spectral radius of $Q^\prime_\Phi(X)$ over the NMB model population. The optimal value for the population mean  of $\Phi^\prime(\bar y_0)=0.29$  is marked with red circle. 
}\label{fig:hist_opt_K_Phi}
\end{figure}

Over the model population, the achievable convergence rate values with respect to the frequency modulation feedback in Fig.~\ref{fig:hist_rho_Phi} are distributed similarly to the case of amplitude modulation in Fig.~\ref{fig:hist_rho_F}. However, the required slopes of the frequency modulation characteristic are much lower; see Fig.~\ref{fig:hist_opt_K_Phi}. Once again, the value of $\Phi^\prime(\bar y_0)$ corresponding to the population mean value of $\alpha$ is approximately in the middle of the range.

\paragraph{Amplitude and frequency modulation}

The convergence to the desired 1-cycle, when both amplitude and frequency modulation are exploited in  closed-loop system~\eqref{eq:1},~\eqref{eq:2}, is difficult to analyze analytically. In Fig.~\ref{fig:convergence_F_Phi}, the spectral radius of $Q^\prime(X)$ is calculated for the values of $\Phi^\prime(\bar y_0)$ and $F^\prime(\bar y_0)$ where the multipliers are real. For better visualization, spectral radius is replaced by $-1$ when the Jacobian has a complex multiplier. The manifolds where the direct and reverse double-multiplier bifurcations occur seem to be affine functions in the $(F^\prime(\bar y_0),\Phi^\prime(\bar y_0))$ plane. As seen before, the multipliers are always real for $\Phi^\prime(\bar y_0)=F^\prime(\bar y_0)=0$. When $F^\prime(\bar y_0)$ decreases from zero to some negative value, lower values of $\Phi^\prime(\bar y_0)$ are required to preserve fixed point stability (cf. Theorem~\ref{th:stability}) and improve convergence without giving rise to oscillating transients. Soon, when $F^\prime(\bar y_0)<-8.3$, the use of amplitude modulation definitely results in complex multipliers. This agrees well with what has been obtained for the case of purely amplitude modulation; cf. Fig.~\ref{fig:hist_opt_K_F}.

\begin{figure}[ht]
\centering 
\includegraphics[width=1.1\linewidth]{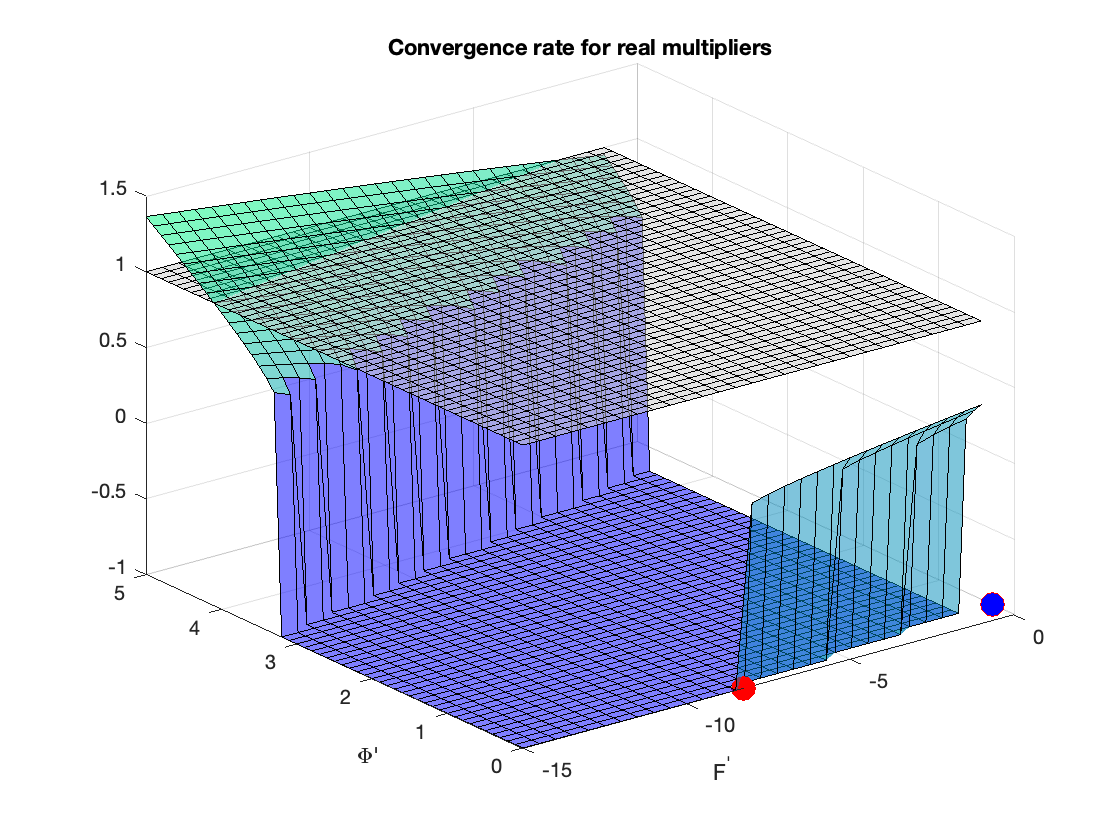}
\caption{
Spectral radius of $Q^\prime(X)$  as function of $\Phi^\prime(\bar y_0)$ and $F^\prime(\bar y_0)$. The green and light-blue surfaces show the spectral radius of the Jacobian when all multipliers are real. For visualization purposes, the spectral radius is set to $-1$ when a complex-conjugate pair of multipliers is present (dark-violet area). The grey plane indicates the stability boundary. The red and blue circles mark, respectively, the optimal pure-amplitude value $F^\prime(\bar y_0)=-8.26$ and the optimal pure-frequency value $\Phi^\prime(\bar y_0)=0.29$, both computed for the population-mean parameter $\alpha=0.0374$.
}\label{fig:convergence_F_Phi}
\end{figure}

\section{Impulsive  dosing controller}\label{sec:case-study}

Using {\it atracurium} in surgical practice, NMB is initiated with a bolus dose of $400\text{--}500~\mathrm{\mu g/kg}$. Further into the procedure, maintenance doses of $80\text{--}200~\mathrm{\mu g/kg}$ are  administered every $10\text{--}20~\mathrm{min}$. Under anesthesia, recovery to $y(t)=25\%$  is achieved $35\text{--}45~\mathrm{min}$ after injection, and recovery is usually $95\%$ complete approximately one hour after injection. This is essentially an open-loop control strategy, but the medication effect is closely observed by means of an NMB monitor and maintained within \eqref{eq:clinical_boundaries}.


\subsection{Open-loop administration}\label{sec:open_loop}

In Fig.~\ref{fig:open_loop}, a scenario with an initial bolus dose of $400~\mathrm{\mu g/kg}$ and a train of five sequential equidistant ($T=20~\mathrm{min}$) maintenance doses of $200~\mathrm{\mu g/kg}$ is simulated for the model~\eqref{eq:lin_NMB},~\eqref{eq:nonlin_NMB} assuming the population mean values of the parameters, i.e. $\bar\alpha$, $\bar\gamma$. Under a sustained maintenance phase ($T=20$, $\lambda=200$), the NMB effect converges to a 1-cycle corresponding to the fixed point 
\begin{equation}\label{eq:fp_NMB}
    X^\top=\lbrack 179.7316 \quad 56.3880 \quad 9.0833\rbrack,
\end{equation}
and yielding an output contained within the corridor $3.9866 \le y(t)\le 6.1562$, which satisfies~\eqref{eq:clinical_boundaries}. The corridor in which the periodic solution evolves can be computed without performing a simulation from Proposition~2 in~\cite{MPZ24}.
The convergence to the 1-cycle is slow, and $\max_{t\in [T,5T]} y(t)=5.6700$, thus revealing that the stationary solution is not reached. There is apparently also an overshoot of $\min y(t)=2.6037$; that is, the system output goes under the lower bound of the corridor corresponding to the 1-cycle. Generally, the performance of the open-loop strategy for the population mean values of the parameters is acceptable since it complies with~\eqref{eq:clinical_boundaries}. Yet, across the considered population, the open-loop strategy fails to deliver the desired performance. The problems are particularly severe with the upper bound of the output corridor; see Fig.~\ref{fig:max_y_ol_pop}, where $\max_{t\in [T,5T]} y(t)\ge {\bf y}_{\max}$ is observed in 18 patient cases, see also Table~\ref{tab:cases}. In four cases (Fig.~\ref{fig:min_y_ol_pop}), $\min_t y(t)>10$ meaning that the output value is never under ${\bf y}_{\max}$. 
\begin{figure}[ht]
\centering 
\includegraphics[width=0.7\linewidth]{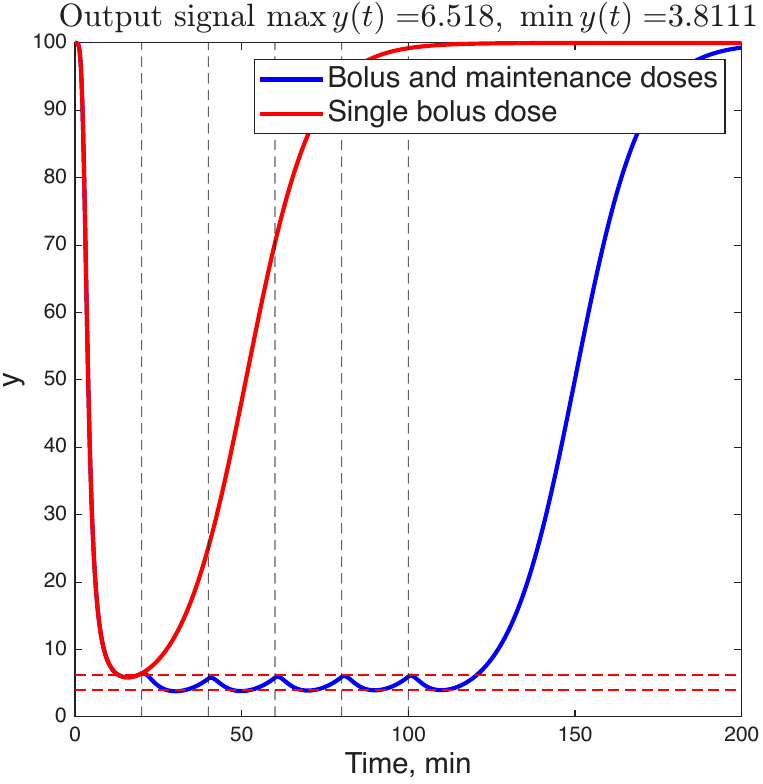}
\caption{Administration of {\it atracurium} in open loop. Population mean values are used in simulation. Response to a bolus dose of $400~\mathrm{\mu g/kg}$ -- solid red line.  A bolus dose of $400~\mathrm{\mu g/kg}$ followed by five maintenance doses each $20~\mathrm{min}$ -- blue line. Vertical dashed lines mark the instants of maintenance dose administration. The output corridor corresponding to a 1-cycle with $T=20~\mathrm{min}$ and $\lambda=200$ is depicted by dashed red lines.}\label{fig:open_loop}
\end{figure}

\begin{figure}[ht]
\centering 
\includegraphics[width=1.0\linewidth]{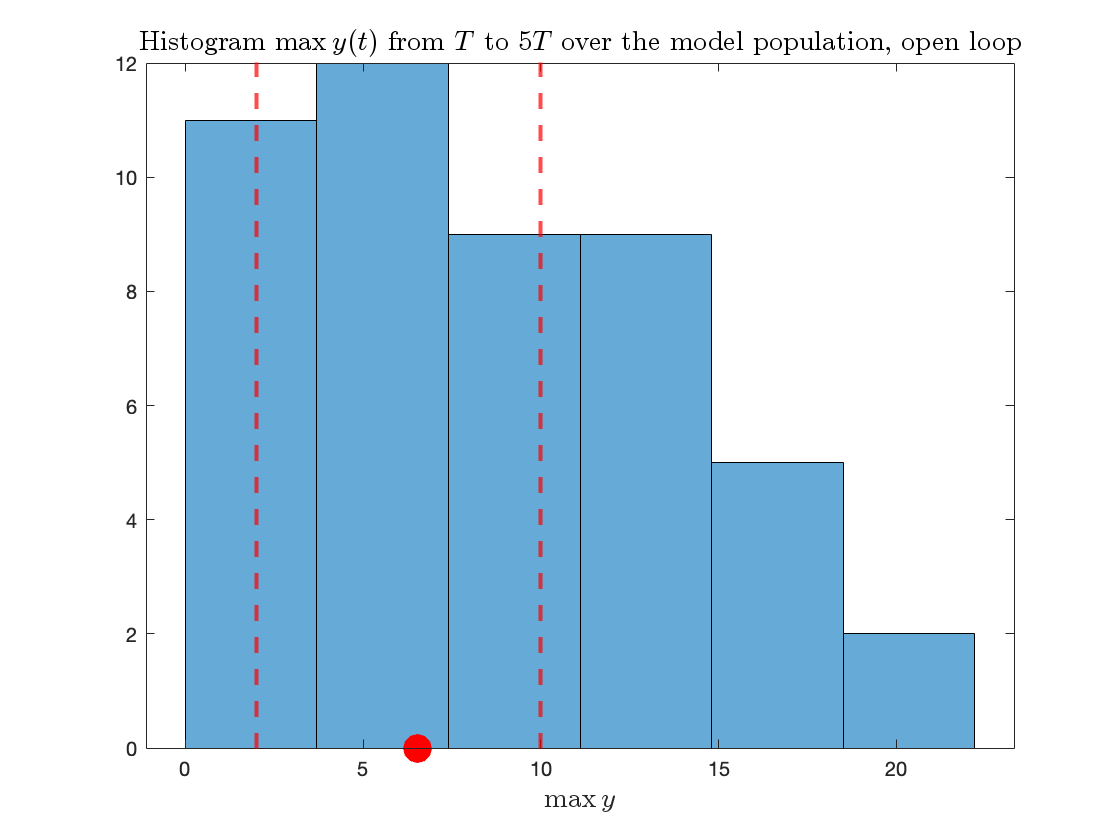}
\caption{Histogram of maximal values of $y(t)$ in the interval $[T,5T]$ over the model population under open-loop administration. A bolus dose of $400~\mathrm{\mu g/kg}$ followed by five maintenance doses $200~\mathrm{\mu g/kg}$ each $20~\mathrm{min}$. Red circle shows the value achieved for population mean values. Desired interval of output values is marked with red vertical dashed lines.}\label{fig:max_y_ol_pop}
\end{figure}
\begin{figure}[ht]
\centering 
\includegraphics[width=1.0\linewidth]{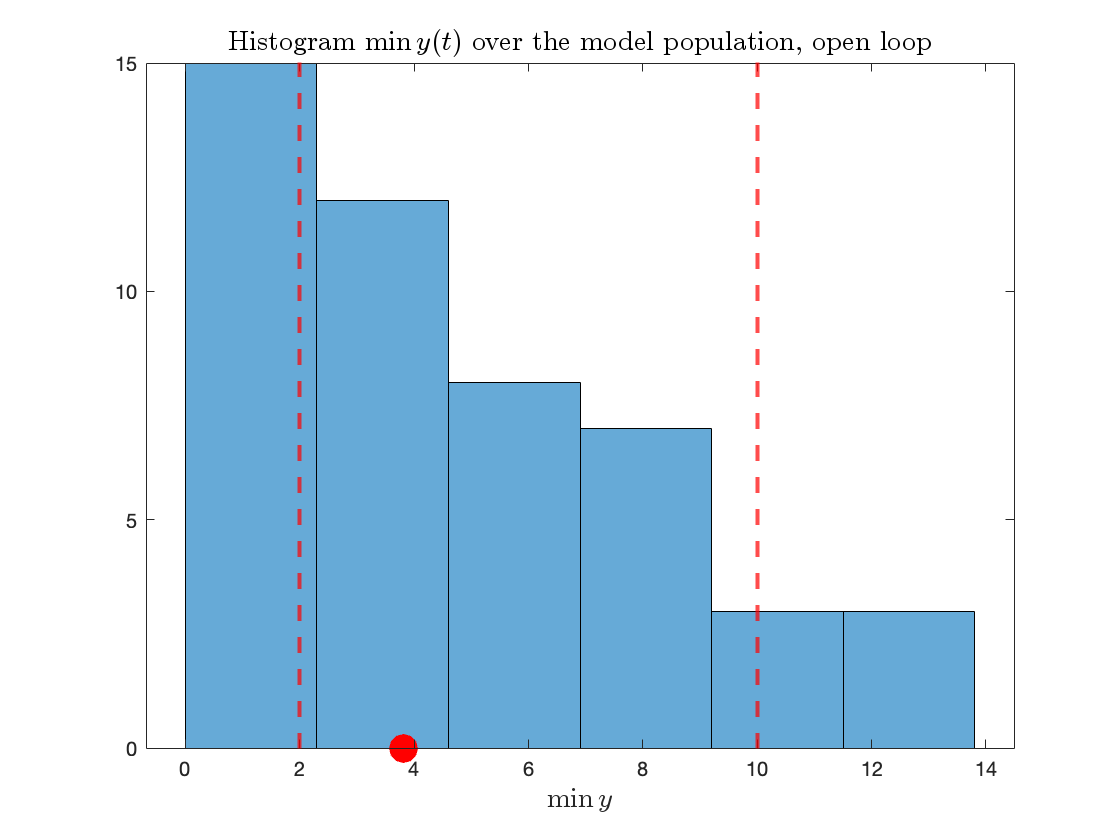}
\caption{Histogram of minimal values of $y(t)$ over the model population under open-loop administration. A bolus dose of $400~\mathrm{\mu g/kg}$ followed by five maintenance doses $200~\mathrm{\mu g/kg}$ each $20~\mathrm{min}$. Red circle shows the value achieved for the population mean values. Desired interval of output values~\eqref{eq:clinical_boundaries} is marked with red vertical dashed lines.}\label{fig:min_y_ol_pop}
\end{figure}

\subsection{Controller design algorithm}\label{sec:design}
In view of the theoretical results presented above, the control problem formulated in Section~\ref{sec:formulation} can be solved in the following way.
\begin{enumerate}[Step 1:]
\item Given the 1-cycle parameters $\lambda$ and $T$, calculate the fixed point $X$ by applying Theorem~\ref{pro:fp}.
\item Select suitable parametrizations of the modulation functions $\bar F(\cdot)$ and $\bar \Phi(\cdot)$.
\item Select a pair $\left(F^\prime(\bar y_0),\Phi^\prime(\bar y_0)\right)$ satisfying the stability condition of 1-cycle from Theorem~\ref{th:stability}.
\item For the selected in Step~2 parametrizations, solve the equations 
\[ \frac{\rm{d}}{\rm{d}\theta} \bar F(\theta)|_{\theta=y_0}=
\frac{F'(\bar y_0)}{\varphi'(\bar y_0)},
\quad \frac{\rm{d}}{\rm{d}\theta} \bar \Phi(\theta)|_{\theta=y_0}=\frac{\Phi'(\bar y_0)}{\varphi'(\bar y_0)}.
\]
with respect to the parameters defining the slopes of the modulation functions.
\item Solve the equations 
\[
F(\bar y_0)=\lambda, \quad \Phi(\bar y_0)=T,
\]
with respect to the parameters of $\bar F(\cdot),\bar \Phi(\cdot)$.
\end{enumerate}
Recall that the choice of the slopes of the modulation functions in Step~3 influences, besides stability of the periodic solution, the transient properties of the closed-loop system when converging to the 1-cycle from an initial condition outside of it.
\subsection{Implementation}\label{sec:implementation}
The pulse-modulated controller design procedure specified above is applied in this section to the NMB model in~\eqref{eq:lin_NMB},~\eqref{eq:nonlin_NMB} assuming the population mean values of the parameters.
\paragraph*{Step~1}
The fixed point corresponding to the 1-cycle parameters $\lambda=200$, $T=20$ is given by~\eqref{eq:fp_NMB}.
\paragraph*{Step~2}
Following~\cite{MPZ24}, select $\bar F (\cdot), \bar \Phi (\cdot)$  in~\eqref{eq.phi_f_nobar} as piecewise affine, i.e.
\begin{align}\label{eq:affine_Phi}
    \bar \Phi (\xi)= \begin{cases} \Phi_2 &   \Phi_2 < k_2\xi +k_1, \\
     k_2\xi +k_1 & \Phi_1 \le  k_2\xi +k_1 \le \Phi_2, \\
    \Phi_1  &  k_2\xi +k_1 < \Phi_1, 
     \end{cases}
\end{align}
\begin{align}\label{eq:affine_F}
    \bar F (\xi)= \begin{cases} F_1 &  k_4\xi +k_3< F_1, \\
     k_4\xi +k_3 & F_1 \le k_4\xi +k_3 \le F_2, \\
    F_2 & F_2 <k_4\xi +k_3.
     \end{cases}
\end{align}
Recalling from~\eqref{eq:nonlin_NMB} that $y(t)\in\lbrack 0, 100\rbrack$, the following inequalities apply
\begin{equation}\label{eq:F_Phi_bounds}
    \Phi_1\le k_1, \  100k_2+k_1\le\Phi_2, \  F_1\le k_3, \  100k_4+k_3\le F_2.
\end{equation}
From the bounds on the modulation functions, it follows that the feedback cannot administer a dose that is greater than $F_2$ or less than $F_1$. Further, no dose is administered sooner than $\Phi_1$ from the previous one and at least one dose is administered within a time interval of $\Phi_2$. Numerical values of these bounds can be easily obtained from the manual medication protocols for the drug in question.

The affine form of the modulation functions results in linear design equations but has an apparent limitation. The functions $\bar F(\cdot)$ and $\bar \Phi(\cdot)$ are completely defined by the fixed point $X$ (i.e. the values of $\lambda$, $T$) and the  slopes $k_4$, $k_2$ calculated in {\it Step~4} to guarantee stability. If an additional dosing condition has to be satisfied, e.g. a certain bolus dose has to be administered in the beginning of the surgery ($\bar F(100\%)$), then a compromise has to be made or a more accommodating parametrization of the  modulation functions selected.

\paragraph*{Step~3}
Choose the slopes of the modulation functions at the fixed point as $F^\prime(\bar y_0)=-2$ and $\Phi^\prime(\bar y_0)=0.7$. 
For stability condition~\eqref{eq.necess-stab1}, it applies
\[
0.8156 > -1.2825\cdot 10^{9}.
\]
Stability condition~\eqref{eq.necess-stab2} then reads
\[
 0.0454\cdot F^\prime(\bar y_0) -0.5700\cdot\Phi^\prime(\bar y_0)=-0.4898>-1.
\]
Condition~\eqref{eq.necess-stab3} gives
\[
 5.1721\cdot 10^{-10} >0
\]
and orbital stability  of the periodic solution is therefore guaranteed according to Theorem~\ref{th:stability}. The spectral radius of the Jacobian $\rho(Q^{\prime}(X))= 0.2349$ and the eigenvalue spectrum $\sigma(Q^{\prime}(X))= \{ 0.1551 \pm 0.1765i,\  0.0002\}$.
\paragraph*{Step~4}
For $F^\prime(\cdot)$ and $\Phi^\prime(\cdot)$ in~\eqref{eq:affine_Phi} and~\eqref{eq:affine_F}, by applying the chain rule 
\begin{align}\label{eq:k_2_k_4}
     F^\prime(\bar y_0)&=\bar F^\prime(y_0)\varphi^\prime(\bar y_0)= k_4 \varphi^\prime(\bar y_0),\\ 
     \Phi^\prime(\bar y_0)&= \bar \Phi^\prime(y_0)\varphi^\prime(\bar y_0)= k_2 \varphi^\prime(\bar y_0), \nonumber
\end{align}
where $\bar y_0=CX$ and
\[
\varphi^\prime(\xi)= -\frac{\gamma 100 C_{50}^\gamma  \xi^{\gamma-1}}{ {(C_{50}^\gamma + {\xi}^\gamma)}^2}.
\]
This yields the numerical values $\bar y_0=9.0833$, $\varphi^\prime(\bar y_0)=-1.6616$,
$k_4 =1.2036$, $k_2 =-0.4213$.
\paragraph*{Step~5}
To obtain a 1-cycle with the desired parameters, the following equations have to hold
\begin{align}\label{eq:k_1_k3}
    F(\bar y_{0})&= (\bar F \circ \varphi)(\bar y_{0})=k_4 \varphi(\bar y_{0})+k_3= \lambda,\\
    \Phi(\bar y_{0})&= (\bar \Phi \circ \varphi)(\bar y_{0})= k_2 \varphi(\bar y_{0})+k_1=T, \nonumber
\end{align}
thus yielding $k_3=192.7539$, $k_1=22.5361$. 

Now the pulse-modulated feedback controller is designed and the resulting modulation functions are depicted in Fig.~\ref{fig:bar_F_bar_Phi}.

\subsection{Feedback controller evaluation}

Consider now the feedback implementation of the periodic dosing regimen from Section~\ref{sec:open_loop}. The target 1-cycle is the one defined by the fixed point in~\eqref{eq:fp_NMB}, corresponding to $\lambda=200$ and $T=20$. Since the modulation functions are designed to satisfy~\eqref{eq:k_1_k3}, this periodic orbit coincides with that generated by the corresponding open-loop train of equidistant impulses of equal weight. The resulting periodic solution of the closed-loop system~\eqref{eq:lin_NMB},~\eqref{eq:nonlin_NMB},~\eqref{eq:2} is shown in Fig.~\ref{fig:1-cycle}.


 A feedback drug administration system is expected to work in a wide range of the plant output values. Transient behaviors to the designed periodic solution from a distant point in the state space are therefore important. An underdosing of an NMB agent, i.e. $\exists~ t: {\bf y}_{\max}< y(t)$, is more critical than an overdosing event, i.e. $\exists~ t:  y(t)<{\bf y}_{\min}$, since there is a risk of disrupting the surgery in the former case.
 For the design case considered in Section~\ref{sec:implementation}, the transient process corresponding to a start of a surgical procedure, i.e. $x(0)=0,\ y(0)=100\%$, is shown in Fig.~\ref{fig:transient_7}. The controller satisfies the clinical output range in~\eqref{eq:clinical_boundaries} but exhibits an overshoot with respect to the bounds of the 1-cycle resulting from the complex eigenvalues of the Jacobian, cf. Step~3 in Section~\ref{sec:implementation}.  This can be addressed by reducing the slope of the frequency modulation function to the value corresponding to the double multiplier point and maximizing the (local) convergence rate, cf. Appendix~\ref{app:E}. Then, the overshoot is reduced from $\inf_t y(t)=2.9689$ to $\inf_t y(t)=3.2252$, i.e. from $25.5\%$ to $19\%$ with respect to lower output corridor bound. This is in fact worse  compared to open-loop administration in Fig.~\ref{fig:open_loop}, where the overshoot is only $4.4\%$. Yet, in open loop, there is an undershoot that is not at all present in closed-loop administration. Notice that the performance of the pulse-modulated controller can be further tuned for a particular combination of $(\alpha,\gamma)$ but is not performed here as the clinical output range in~\eqref{eq:clinical_boundaries} is well satisfied.

\textcolor{black}{As mentioned in Section~\ref{sec:data}, some of the models in the considered dataset were invalidated in \cite{MP26} as they produced infeasible maintenance phase regimens. Compared to what is observed in clinical practice, the subset of PIN entries $\Gamma_1=\{4,     6,    11,    12,    15,    22,    24,    25,    37,    46\}$ resulted in higher NMB agent  doses   as well as longer interdose intervals. Elevated maintenance doses with normal interdose intervals were obtained for the models $\Gamma_2=\{ 2,     9,    26,    39,    43,    44,    45\}$.}

To study the performance of the pulse-modulated NMB agent  controller across the  patient model population, four design cases leading to different modulation functions are now considered, see Table~\ref{tab:cases}.

Open-loop administration is denoted as Case~0. The evaluation criteria are the incidence of NMB agent underdosing and overdosing defined according to~\eqref{eq:clinical_boundaries}.
Histograms of the minimal and maximal values of the PK/PD plant output for the different design cases are provided in Fig.~\ref{fig:min_y} and Fig.~\ref{fig:max_y}, correspondingly. Notice that the maximal value of $y(t)$ is calculated over the interval $t\in \lbrack T,5T \rbrack$ since it always holds that $\sup_t y(t)=y(0)=100\%$. \textcolor{black}{Importantly, all but one (PIN=10) cases of underdosing in open-loop administration occur for invalidated models that belong either to $\Gamma_1$ or $\Gamma_2$. This confirms that the standard manual {\it atracurium} administration procedure is generally safe with respect to underdosing. }

By inspection of the histograms, it can be concluded that the introduction of the feedback has a major impact on the underdosing events but not on the overdosing ones. Actually, pulse-modulated feedback either performs similarly to the open-loop administration (or even worse, \textcolor{blue}{cf. Case~1}) since the first dose is always provided at $t=0$ and is equal to the initial bolus dose according to the evaluation protocol. A straightforward way of handling this issue is to start with lower bolus doses and  allow the feedback to activate earlier (i.e. use a lower $\Phi_1$). This will of course prolong the induction phase of NMB.
Applying the open-loop administration (Case~0), underdosing is observed at 18 instances across the cohort (Table~\ref{tab:cases}), which is reduced to 11 instances under feedback (Case~2). In Case~1, the highest values of the output are clustered about the population mean value. It is also the only design case where the lowest values of the output are never over the bound ${\bf y}_{\max}$. In contrast, the open-loop administration yields \textcolor{black}{four cases} where the output $y(t)$ lies over the bound ${\bf y}_{\max}$ all the time, i.e. $\forall t: {\bf y}_{\max}< y(t)$. \textcolor{black}{All the cases where underdosing is observed under feedback belong to either $\Gamma_1$ or $\Gamma_2$, i.e. correspond to invalidated models.}

The robustness properties of the proposed pulse-modulated controller are illustrated in Fig.~\ref{fig:alpha_gamma_color}. As expected, controlling models with very high or very low values of $\gamma$ with the same controller presents significant challenges. Low values of $\gamma$ lead to underdosing and high values of $\gamma$ result in overdosing. Combinations of low $\alpha$ and extreme values of $\gamma$ are especially difficult to deal with and always exhibit over- or underdosing under an impulsive feedback designed for the population mean values. Recall that the open-loop strategy fails to maintain clinical NMB interval~\eqref{eq:clinical_boundaries} in two thirds of the modeled cases (i.e. 32 out of 48, see Tab.~\ref{tab:cases}). \textcolor{black}{The introduction of impulsive feedback reduces the occurrence of underdosing both for models in $\Gamma_1$ and those in $\Gamma_2$, even though these models are judged infeasible. }

\begin{figure}[ht]
\centering 
\includegraphics[width=1.0\linewidth]{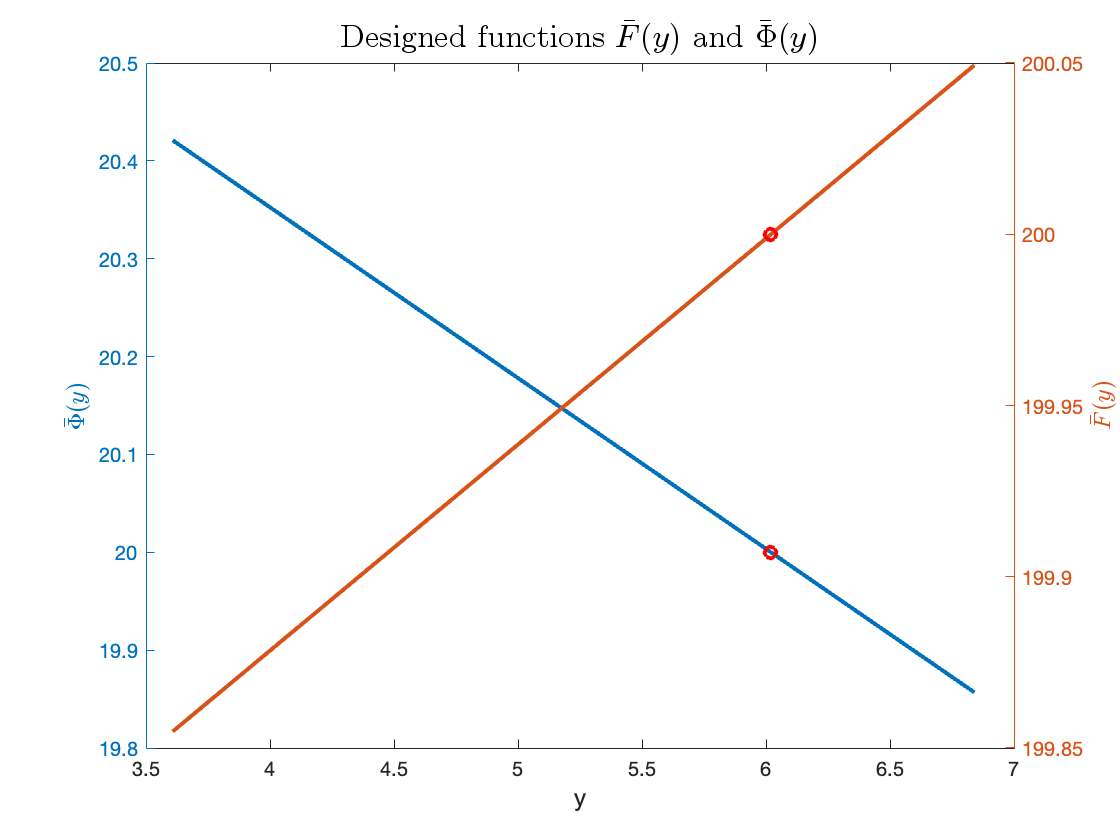}
\caption{The affine parts of the designed functions $\bar F(\cdot)$, $\bar \Phi(\cdot)$. The values $\bar F(y_0)$, $\bar \Phi(y_0)$ are marked with red circle.}\label{fig:bar_F_bar_Phi}
\end{figure}
\begin{figure}[ht]
\centering 
\includegraphics[width=1.0\linewidth]{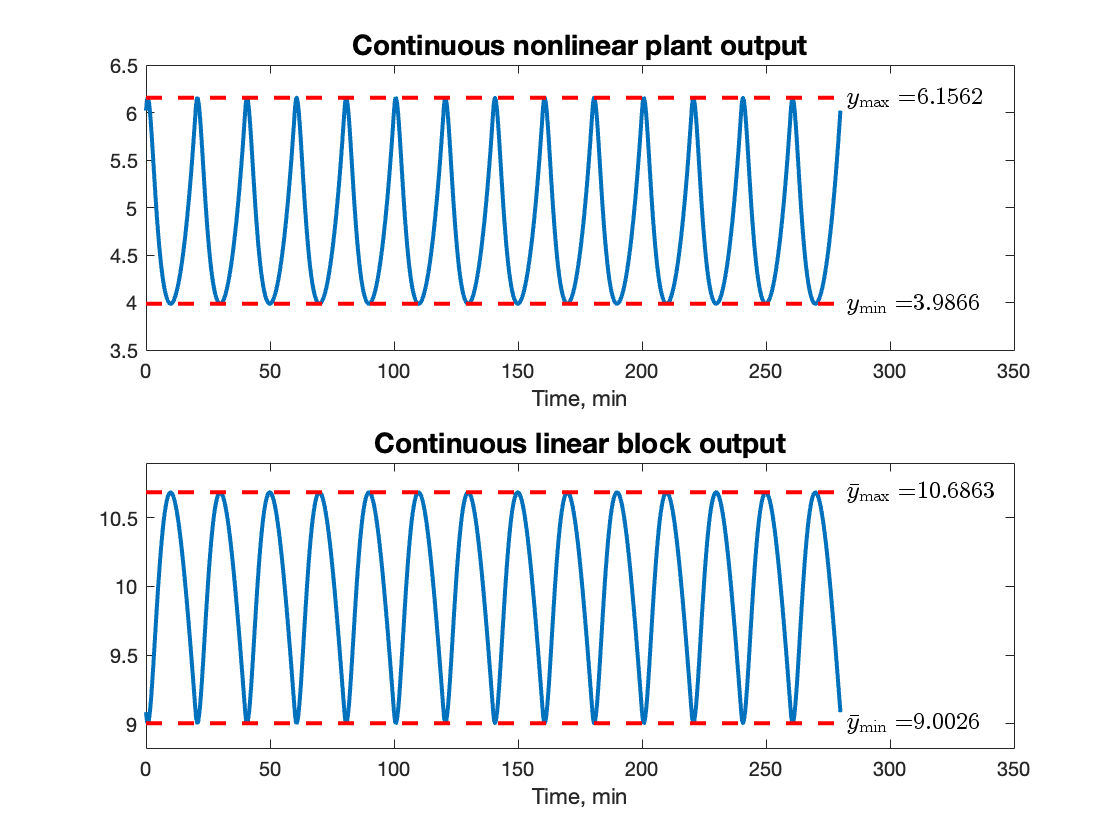}
\caption{The designed 1-cycle initiated from the fixed point $X$. The nonlinear output $y(t)$ (top plot)  and the linear output $\bar y(t)$ (bottom plot) are presented. }\label{fig:1-cycle}
\end{figure}
\begin{figure}[ht]
\centering 
\includegraphics[width=1.0\linewidth]{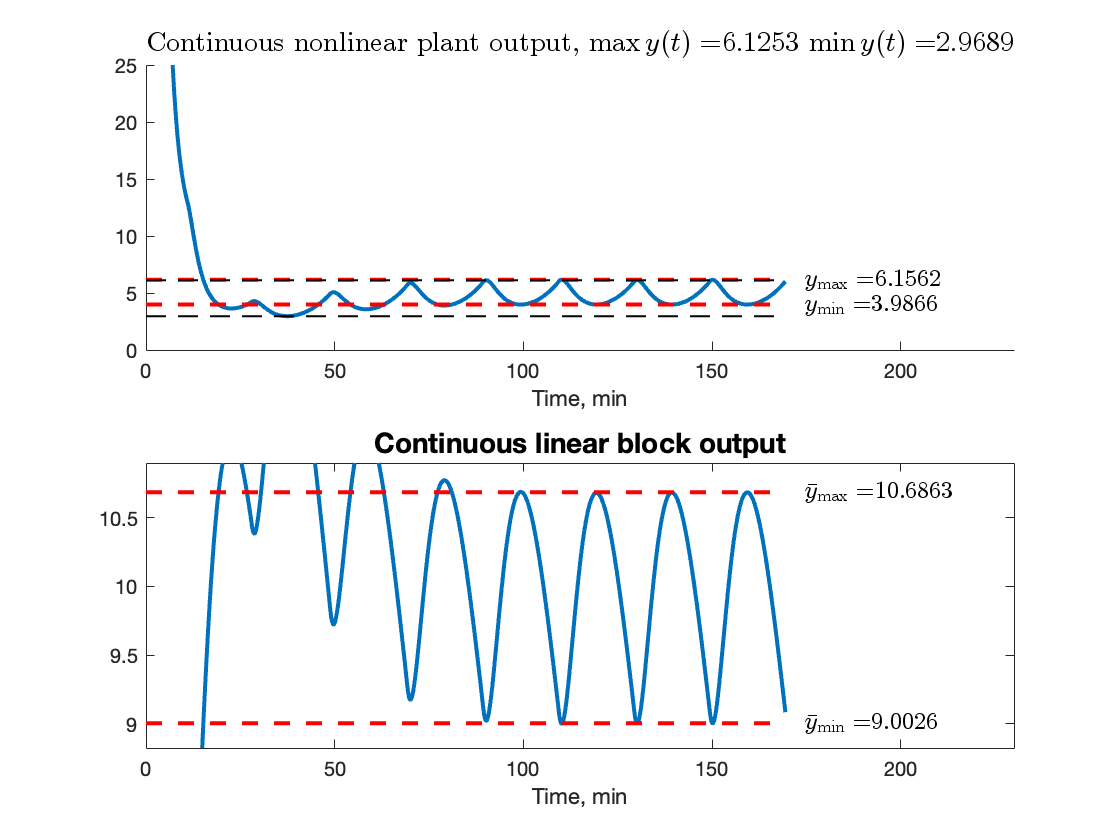}
\caption{The transient process from $x(0)=0, \ y(0)=100\%$ to the designed 1-cycle for $F^\prime(\bar y_0)=-2$, $\Phi^\prime(\bar y_0)=0.7$, Case~2 in Table~\ref{tab:cases}. The nonlinear output $y(t)$ (top plot)  and the linear output $\bar y(t)$ (bottom plot) are presented. The horizontal red dashed lines show the output corridor bounds for the nonlinear/linear output. The horizontal black dashed lines mark $\inf_t y(t)$ and $\sup_{t\in \lbrack T,5T\rbrack} y(t)$.}\label{fig:transient_7}
\end{figure}
\begin{figure}[ht]
\centering 
\includegraphics[width=1.0\linewidth]{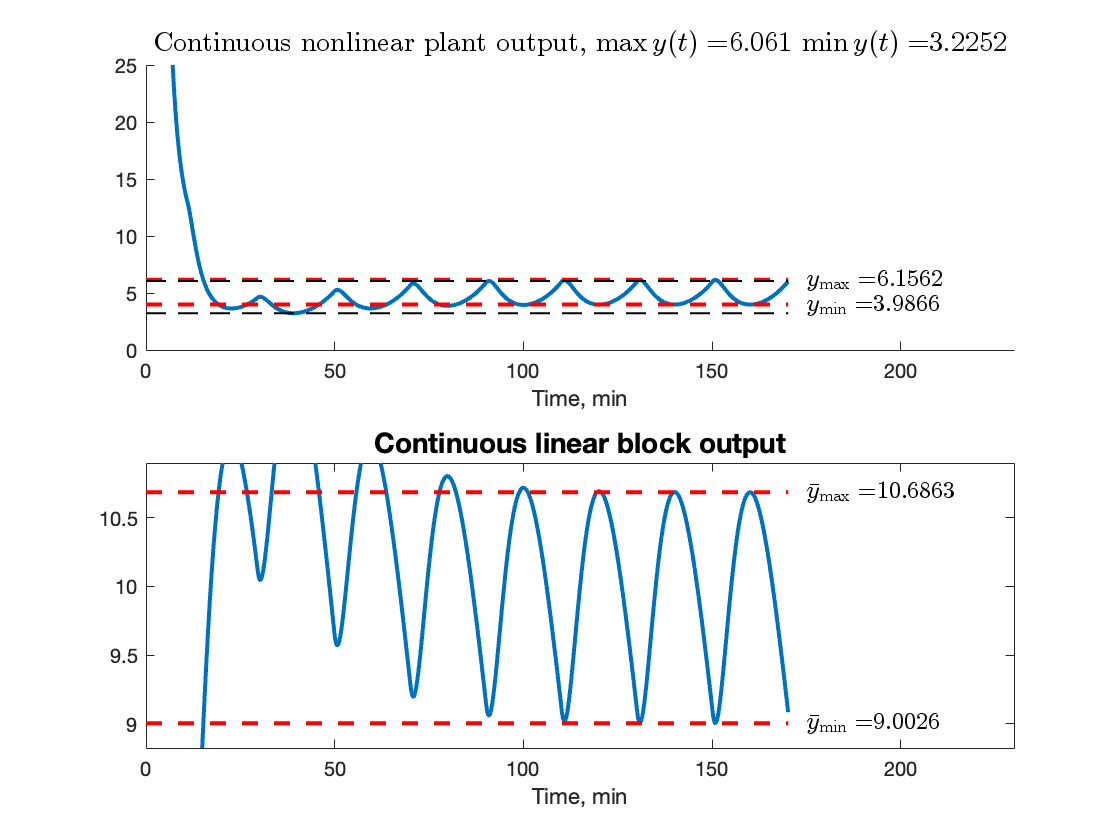}
\caption{The transient process from $x(0)=0, \ y(0)=100\%$ to the designed 1-cycle for $F^\prime(\bar y_0)=-2$, $\Phi^\prime(\bar y_0)=0.33707$ (double multiplier point), Case~3 in Table~\ref{tab:cases}. The nonlinear output $y(t)$ (top plot)  and the linear output $\bar y(t)$ (bottom plot) are presented. The horizontal red dashed lines show the output corridor bounds for the nonlinear/linear output. The horizontal black dash lines mark $\inf_t y(t)$ and $\sup_{t\in \lbrack T,5T\rbrack} y(t)$.}\label{fig:transient_8}
\end{figure}
\begin{figure}[h]
\subcaptionbox{Case 1}{\includegraphics[width=0.24\textwidth]{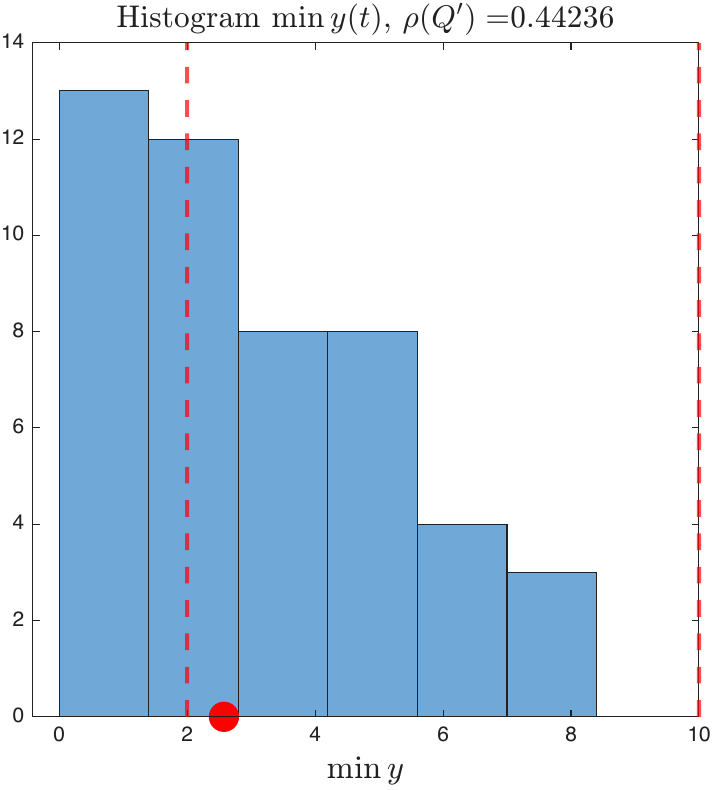}}%
\hfill
\subcaptionbox{Case 2}{\includegraphics[width=0.24\textwidth]{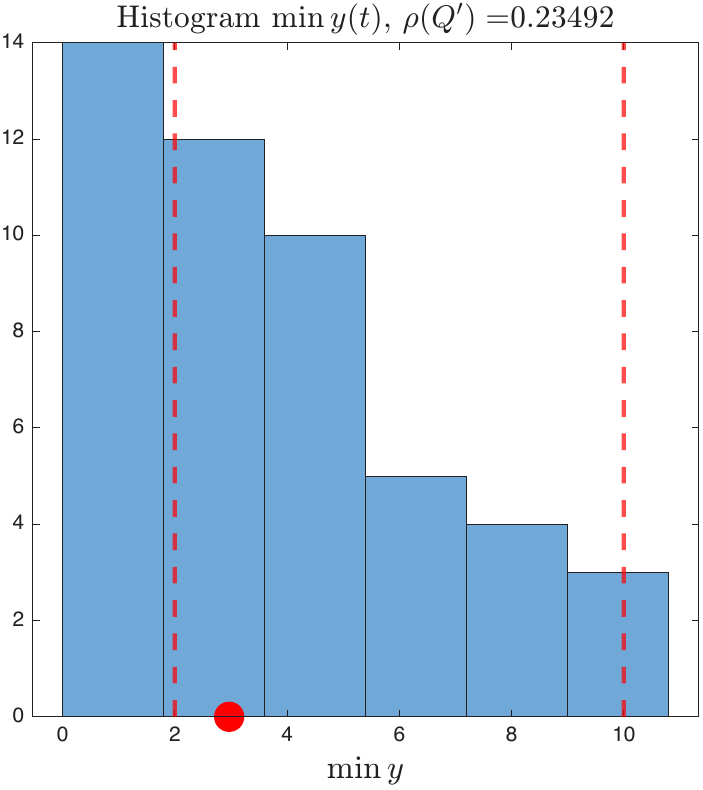}}%
\\
\subcaptionbox{Case 3}{\includegraphics[width=0.24\textwidth]{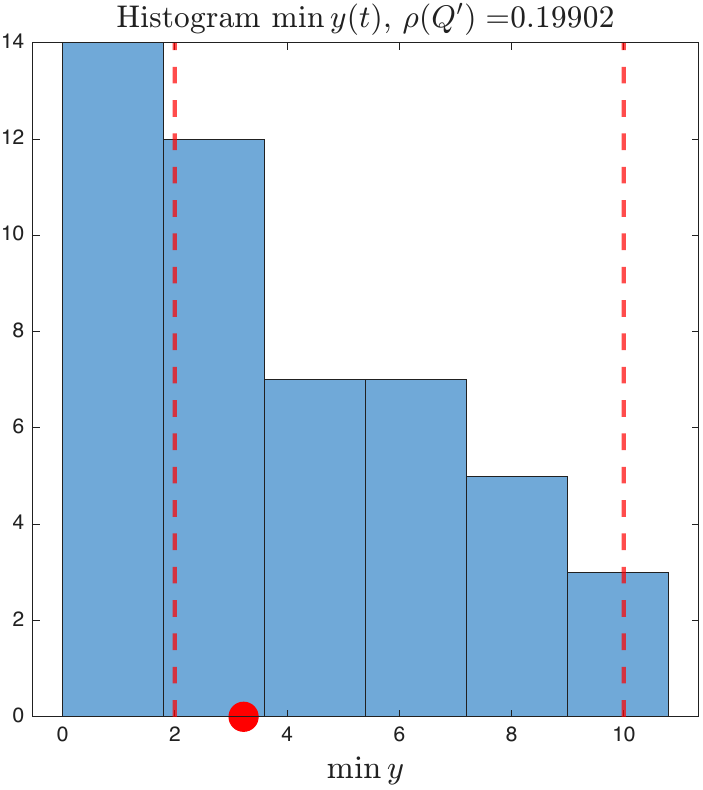}}%
\hfill
\subcaptionbox{Case 4}{\includegraphics[width=0.24\textwidth]{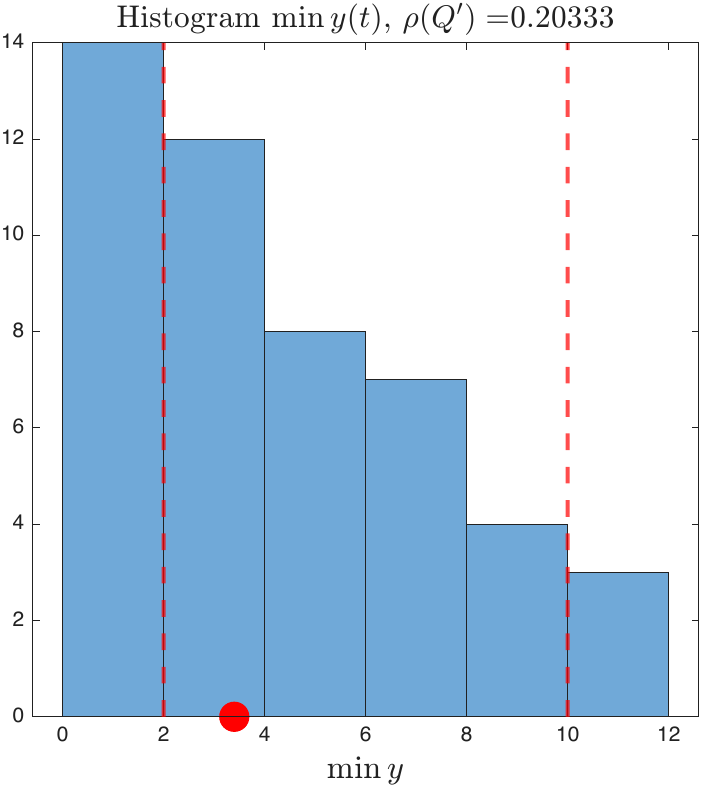}}%
\caption{Histograms of $\inf_t y(t)$ across the patient cohort for the design cases in Table~\ref{tab:cases}. The vertical dashed lines correspond to ${\bf y}_{\min}$, ${\bf y}_{\max}$ in~\eqref{eq:clinical_boundaries}.}\label{fig:min_y}
\end{figure}
\begin{figure}[h]
\subcaptionbox{Case 1}{\includegraphics[width=0.24\textwidth]{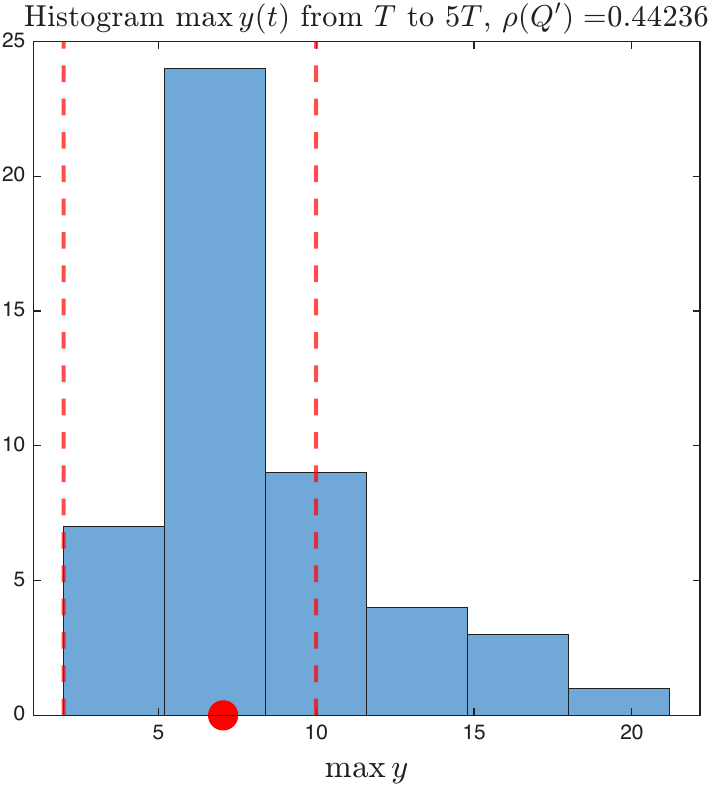}}%
\hfill
\subcaptionbox{Case 2}{\includegraphics[width=0.24\textwidth]{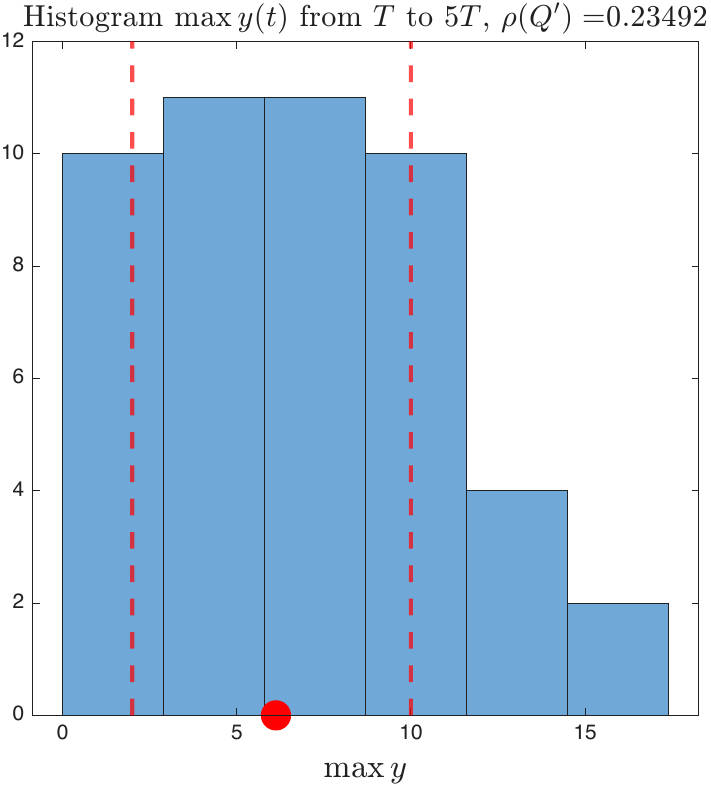}}%
\\
\subcaptionbox{Case 3}{\includegraphics[width=0.24\textwidth]{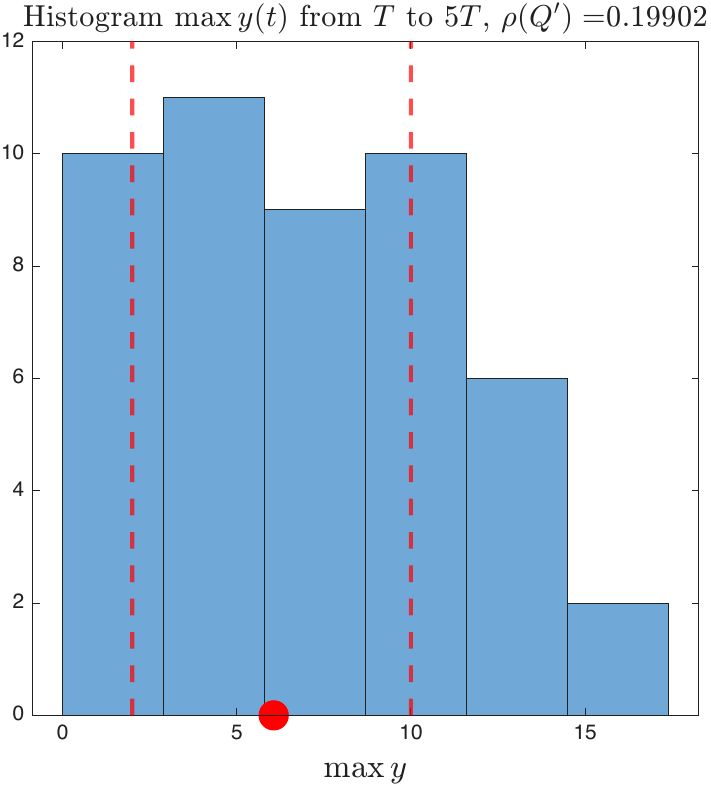}}%
\hfill
\subcaptionbox{Case 4}{\includegraphics[width=0.24\textwidth]{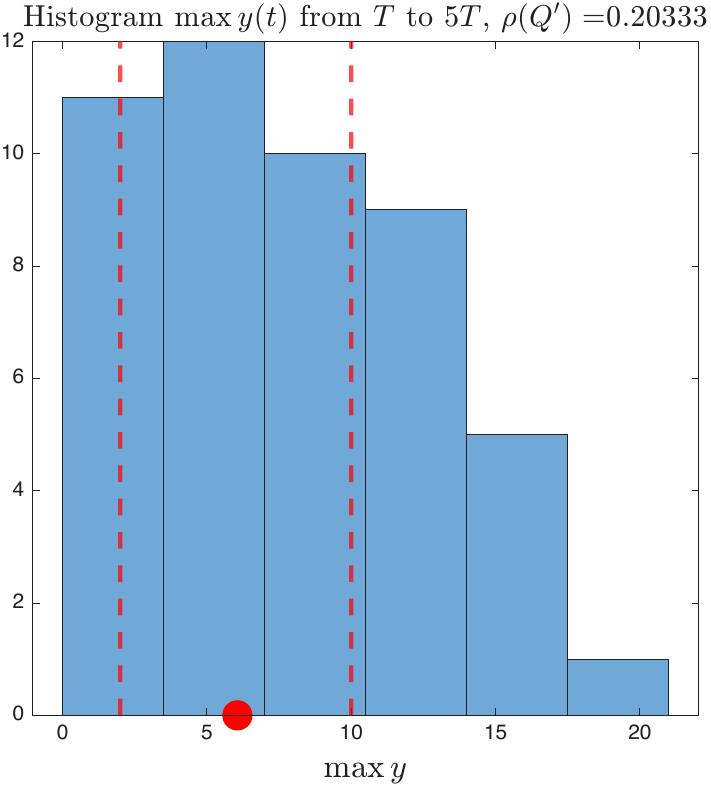}}%
\caption{Histograms of $\sup_{t\in \lbrack T,5T\rbrack} y(t)$ across the patient cohort for the design cases in Table~\ref{tab:cases}. The vertical dashed lines correspond to ${\bf y}_{\min}$, ${\bf y}_{\max}$ in~\eqref{eq:clinical_boundaries}.}\label{fig:max_y}
\end{figure}
%

\begin{table}[]
\begin{center}
\begin{tabular}{ |c|c|c|l|l| } 
 \hline
  & $F^\prime; \Phi^\prime$ & $\rho(Q^\prime)$ & PIN, $y<\bf y_{\min}$ & PIN, $y>\bf y_{\max}$\\ 
  \hline\hline
  Case 0& 0;\,0 &  0.4829  & 1,7,16,17,21,27,  &{\sl 2},{\bf 4},{\bf 6},{\sl 9},10,{\bf 11},{\bf 12}, \\ 
         &       &         & 28,29,32,33,36,   &{\bf 15},{\bf 22},{\bf 24},{\bf 25},{\sl 26},{\bf 37},\\ 
         &       &         & 40,41,48                         &{\sl 39},{\sl 43},{\sl 44},{\sl 45},{\bf 46} \\ 
 Case 1& -1;\,4 & 0.4424 &1,7,13,14,16,17, & {\bf 4},{\bf 6},{\bf 11},{\bf 12},{\bf 15},{\bf 22},{\bf 24},\\ 
        &        &       &19,21,27,28,29,   & {\bf 25},{\sl 26},{\bf 37},{\sl 43},{\bf 46}  \\ 
        &       &      &32,33,36,40,41,48   &  \\ 
 Case 2& -2;\,0.7 & 0.2349  &1,7,16,17,21,27,   &{\sl 2},{\bf 6},{\bf 11},{\bf 24},{\bf 25},{\sl 26},{\bf 37},\\ 
        &          &        & 28,29,32,33,36,40, & {\sl 39},{\sl 43},{\sl 44},{\bf 46}  \\
        &          &       &41,48                  &\\
 Case 3& -2;\,0.337 & 0.1990  &1,7,16,17,21,27,    &{\sl 2},{\bf 6},{\sl 9},{\bf 11},{\bf 12},{\bf 24},{\bf 25}, \\ 
        &              &        &28,29,32,33,36, &{\sl 26},{\bf 37},{\sl 39},{\sl 43},{\sl 44},{\bf 46} \\ 
        &              &        & 40,41,48               & \\
 Case 4& -1;\,0.392 & 0.2033 &1,7,16,17,21,27,  &{\sl 2},{\bf 4},{\bf 6},{\sl 9},{\bf 11},{\bf 12},{\bf 15},\\
        &             &       &28,29,32,33,36, & {\bf 22},{\bf 24},{\bf 25},{\sl 26},{\bf 37},{\sl 39},\\
        &             &       & 40,41,48                    &{\sl 43},{\sl 44},{\bf 46}\\
 \hline
\end{tabular}
\end{center}
 \caption{Pulse-modulated controller design cases. \textcolor{black}{PINs of invalidated models in $\Gamma_1$ are typeset in boldface, while PINs belonging to $\Gamma_2$ are typeset in slanted.}}
    \label{tab:cases}
\end{table}

\begin{figure}[ht]
\centering 
\includegraphics[width=1.0\linewidth]{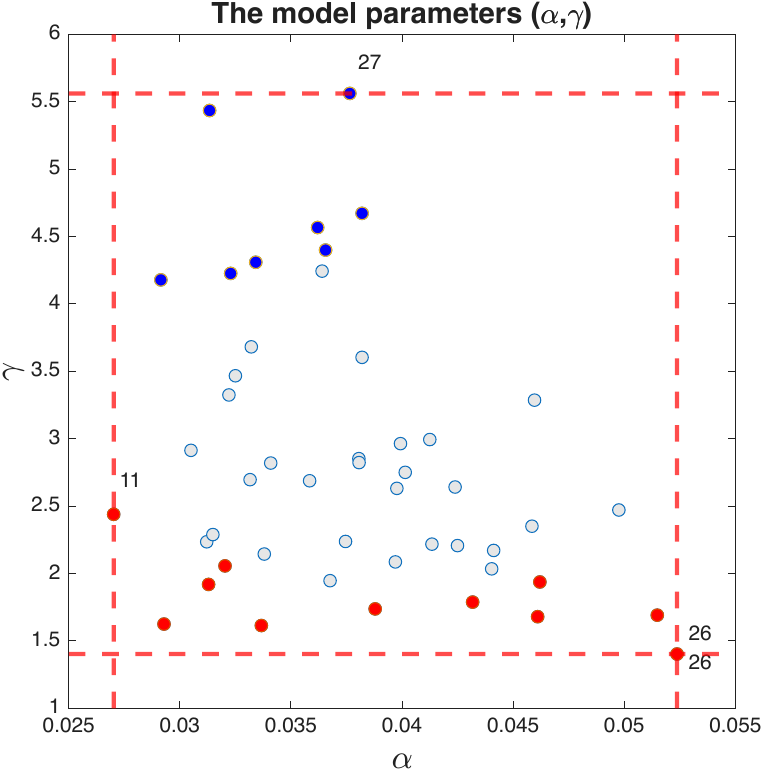}
\caption{The model parameter pairs in the data set. The controller design in Case~2 is applied. Underdosing (${\bf y}_{\max}<\sup_{t\in [T,5T]}y(t)$) -- red dots. Overdosing ($\inf_{t}y(t)<{\bf y}_{\min}$) -- blue dots.  The extreme parameter values are indicated by the Patient Identification Number. 
}\label{fig:alpha_gamma_color}
\end{figure}

\section{Conclusions}\label{sec:conclude}
A pulse-modulated feedback controller employing both amplitude and frequency modulation is considered. The closed-loop dynamics of the controller governing a continuous time-invariant single-input single-output nonlinear plant of Wiener structure are captured by a discrete map. A controller design procedure to produce a sustained stationary periodic solution with given parameters is proposed. It is based on calculating the fixed point of the discrete map corresponding to the desired periodic solution and stabilizing it by choosing the slopes of the modulation functions. A necessary and sufficient stability condition for a periodic solution with only one firing of the feedback in the least period is proven. The utility of the proposed controller is illustrated by a dosing application of a neuromuscular blockade agent widely applied in anesthesiology. A simulation study on a database of pharmacokinetic-pharmacodynamic models estimated from clinical data shows a significant improvement in the incidence of underdosing events compared to an open-loop administration scheme. To reduce the incidence of overdosing events in the induction phase, a high bolus dose has to be substituted by a sequence of lower doses, which would prolong the NMB induction phase.

\bibliographystyle{IEEEtran}
\bibliography{observer,refs} 

\appendices

\section{The Opitz formula for $3\times 3$ matrices}\label{app.opitz}

Given a function $f$, which is complex-analytic in a vicinity of the spectrum $\{-a_1,-a_2,-a_3\}$ of the matrix $A$ from~\eqref{eq:1}, the Opitz formula~\cite{DeBoor2005,PRM24} states that
\begin{equation*}
f(A)=
\begin{bmatrix}
f(-a_1) & 0 & 0\\
g_1f[-a_1,-a_2] & f(-a_2) & 0\\
g_1g_2f[-a_1,-a_2,-a_3] & g_2f[-a_2,-a_3] & f(-a_3)
\end{bmatrix}.
\end{equation*}

Whereas the Opitz formula is valid for two-diagonal matrices of arbitrary dimensions~\cite{DeBoor2005}, the three-dimensional case can be proved in a straightforward manner through reducing $A$ and $f(A)$ to the diagonal forms. Introducing the matrices 
\begin{gather}
S=
\left[\begin{smallmatrix}
1 & 0 & 0\\
\tfrac{g_1}{a_2-a_1} & 1 & 0\\[3pt]
\tfrac{g_1g_2}{(a_2-a_1)(a_3-a_1)} & \tfrac{g_2}{a_3-a_2} & 1
\end{smallmatrix}\right]\label{eq:S}\\
S^{-1}=
\left[
\begin{smallmatrix}
1 & 0 & 0\\
\tfrac{g_1}{a_1-a_2} & 1 & 0\\[3pt]
\tfrac{g_1g_2}{(a_1-a_3)(a_2-a_3)} & \tfrac{g_2}{a_2-a_3} & 1
\end{smallmatrix}\right],\label{eq:S-inv}
\end{gather}
a straightforward computation shows that
\begin{gather}
A=S\left[\begin{smallmatrix}
-a_1 & 0 & 0\\
0 & -a_2 & 0\\
0 & 0 & -a_3
\end{smallmatrix}\right]S^{-1},\label{eq:a-diagonal}\\
f(A)=S\left[\begin{smallmatrix}
f(-a_1) & 0 & 0\\
0 & f(-a_2) & 0\\
0 & 0 & f(-a_3)
\end{smallmatrix}\right]S^{-1}\label{eq:f-matr}.
\end{gather}
Equations~\eqref{eq:S}-\eqref{eq:f-matr} will be used in the proof of Theorem~\ref{th:stability}.

An important property of the divided differences is the generalized mean-value formula~\cite{DeBoor2005}. Assuming, without loss of generality, that $x_0<x_1<x_2$, one has
\begin{equation}\label{eq:mean-value}
\begin{gathered}
f[x_0,x_1]=f'(\bar\theta)\quad\text{for some $\bar\theta\in [x_0,x_1]$},\\
f[x_0,x_1,x_2]=\frac{1}{2}f''(\tilde\theta)\quad\text{for some $\tilde\theta\in [x_0,x_2]$.} 
\end{gathered}
\end{equation}
This entails the following simple yet important proposition.
\begin{proposition}\label{prop:convexity}
Let function $f:(-\infty,0)\to\mathbb{R}$ be strictly convex and $T>0$. Then, $Cf(TA)B>0$.
If, additionally, $f$ is increasing and positive, then the vector $f(TA)B$ is positive.
\end{proposition}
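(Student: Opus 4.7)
\smallskip
\noindent\emph{Proof plan.} The plan is to reduce both claims to positivity properties of the scalar divided differences $f[-Ta_1,-Ta_2]$ and $f[-Ta_1,-Ta_2,-Ta_3]$, which follow from monotonicity and strict convexity of $f$, respectively. The key observation is that the matrix $TA$ has the same two-diagonal structure as $A$, with spectrum $\{-Ta_1,-Ta_2,-Ta_3\}\subset(-\infty,0)$ (since $a_1,a_2,a_3>0$ and $a_1<a_2<a_3$ thanks to $v_1<v_2<v_3$) and with subdiagonal entries $Tg_1,Tg_2$. Hence the Opitz formula stated in Appendix~\ref{app.opitz} applies to $f(TA)$ directly, upon replacing $a_i\mapsto Ta_i$ and $g_i\mapsto Tg_i$.

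The first step is to read off $f(TA)B$ as the first column of the Opitz representation, namely
\[
f(TA)B=\begin{bmatrix} f(-Ta_1)\\ Tg_1\,f[-Ta_1,-Ta_2]\\ T^2g_1g_2\,f[-Ta_1,-Ta_2,-Ta_3]\end{bmatrix},
\]
and therefore $Cf(TA)B$ equals the third component $T^2g_1g_2\,f[-Ta_1,-Ta_2,-Ta_3]$. Since $T,g_1,g_2>0$, the first claim reduces to showing that the second-order divided difference on three distinct nodes in the domain of $f$ is strictly positive when $f$ is strictly convex.

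The second step establishes this positivity. I would \emph{not} rely on the mean-value identity $f[x_0,x_1,x_2]=\tfrac12 f''(\tilde\theta)$ from~\eqref{eq:mean-value}, because strict convexity permits $f''$ to vanish at isolated points (e.g.\ $f(x)=x^4$ at the origin), which would only yield $Cf(TA)B\geq 0$. Instead, I would argue directly from the convex-combination definition: writing the middle node as $x_1=\lambda x_0+(1-\lambda)x_2$ with $\lambda=(x_2-x_1)/(x_2-x_0)\in(0,1)$, strict convexity gives $f(x_1)<\lambda f(x_0)+(1-\lambda)f(x_2)$; dividing through by the positive quantity $(x_1-x_0)(x_2-x_1)$ and rearranging recovers exactly $f[x_0,x_1,x_2]>0$. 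Applied to $x_0=-Ta_3<x_1=-Ta_2<x_2=-Ta_1$ and using the symmetry of divided differences in their arguments, this yields $Cf(TA)B>0$.

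For the second claim, I would check positivity of each of the three entries of $f(TA)B$ separately. The top entry $f(-Ta_1)$ is positive by hypothesis. The middle entry involves $f[-Ta_1,-Ta_2]=(f(-Ta_1)-f(-Ta_2))/(-Ta_1+Ta_2)$; since $-Ta_2<-Ta_1$ and $f$ is increasing, numerator and denominator have the same sign, so this divided difference is positive and the middle entry is positive. The bottom entry is already known to be positive from the first part. I expect the only subtle point in the whole argument to be the strict convexity step described above; everything else is a direct application of the Opitz formula and the sign conventions established by the plant structure.
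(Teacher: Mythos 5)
Your proof is correct and follows the paper's overall skeleton --- apply the Opitz formula of Appendix~\ref{app.opitz} to $TA$, identify $f(TA)B$ with the first column and $Cf(TA)B$ with the $(3,1)$ entry, and reduce everything to signs of divided differences --- but it diverges from the paper at the one step that carries the mathematical content. The paper invokes the generalized mean-value formula~\eqref{eq:mean-value}, $f[x_0,x_1,x_2]=\tfrac12 f''(\tilde\theta)$, to conclude positivity of the second-order divided difference; you instead derive $f[x_0,x_1,x_2]>0$ directly from the defining inequality of strict convexity at the convex combination $x_1=\lambda x_0+(1-\lambda)x_2$. Your route is both more elementary and strictly stronger: the mean-value formula presupposes that $f''$ exists, and even for $C^2$ strictly convex $f$ it only yields $f[x_0,x_1,x_2]\ge 0$, since strict convexity does not exclude $f''(\tilde\theta)=0$ (your example $f(x)=x^4$ is exactly on point). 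As written, the paper's argument therefore establishes the proposition only under the implicit extra hypothesis $f''>0$ --- which happens to hold for every function the proposition is later applied to (cf.\ the explicit verifications $\mu''>0$ and $\nu''>0$ in Corollary~\ref{cor.mu-convex}), so nothing downstream is affected --- whereas your argument proves the statement exactly as stated, for arbitrary strictly convex $f$ with no differentiability assumption. The remaining pieces of your proposal (the sign of the first divided difference from monotonicity, the positivity of $T$, $g_1$, $g_2$, and the symmetry of divided differences in their arguments) coincide with the paper's reasoning.
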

\begin{IEEEproof}
The proof is straightforward by applying the Opitz formula and~\eqref{eq:mean-value} to the matrix $TA$ with the eigenvalues $(-Ta_i)<0$ and noticing that
 $Cf(TA)B$ is nothing else than the $(3,1)$ entry of $f(TA)$ and $f(TA)B$ is its first column.
\end{IEEEproof}
\begin{corollary}\label{cor.mu-convex}
For $\mu(x)=1/(\e^{-x}-1)$ and $\nu(x)=-x\mu(x)$, the vectors $\mu(TA)B$ and $\nu(TA)B$ are positive. 
\end{corollary}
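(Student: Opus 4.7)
The corollary is an immediate instance of Proposition~\ref{prop:convexity}, so the plan is to verify that the two scalar functions $\mu$ and $\nu$ satisfy the three hypotheses of that proposition on $(-\infty,0)$: strict positivity, monotonicity (increasing), and strict convexity. Once these are established, applying Proposition~\ref{prop:convexity} to $f=\mu$ and $f=\nu$ on the spectrum $\{-Ta_1,-Ta_2,-Ta_3\}\subset(-\infty,0)$ of $TA$ gives the claim at once.

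For $\mu(x)=(\e^{-x}-1)^{-1}$, I would argue directly: on $(-\infty,0)$ we have $\e^{-x}>1$, whence $\mu(x)>0$. A one-line computation gives
\[
\mu'(x)=\frac{\e^{-x}}{(\e^{-x}-1)^{2}}>0,\qquad
\mu''(x)=\frac{\e^{-x}(\e^{-x}+1)}{(\e^{-x}-1)^{3}}>0,
\]
so $\mu$ is strictly increasing and strictly convex on $(-\infty,0)$. This handles the first half of the corollary.

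For $\nu(x)=-x\mu(x)=-x/(\e^{-x}-1)$, positivity on $(-\infty,0)$ is immediate since both factors $-x$ and $\mu(x)$ are positive there. The convenient step is the change of variables $y=-x>0$, which reduces the question to the well-known function $f(y)=y/(\e^{y}-1)$ on $(0,\infty)$: since $\nu(x)=f(-x)$, monotonicity and convexity of $\nu$ in $x$ are equivalent to $f$ being decreasing and convex in $y$. Monotonicity of $f$ follows by writing $f'(y)=h(y)/(\e^y-1)^2$ with $h(y)=\e^y(1-y)-1$ and noting that $h(0)=0$, $h'(y)=-y\e^y<0$, so $h<0$ on $(0,\infty)$.

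The main obstacle, such as it is, lies in the convexity of $\nu$. The plan is to compute $f''(y)$ and reduce the sign question to showing that $p(y)\triangleq(y-2)\e^{y}+(y+2)$ is strictly positive on $(0,\infty)$. This in turn follows by iterating the trick used for $h$: one checks $p(0)=0$, $p'(0)=0$, and $p''(y)=y\e^{y}>0$, so successive integration forces $p'>0$ and $p>0$ on $(0,\infty)$. With strict convexity of $f$ established, $\nu$ is strictly convex on $(-\infty,0)$, Proposition~\ref{prop:convexity} applies, and $\nu(TA)B>0$ follows. The whole argument is essentially elementary calculus; no structural property of $A$ beyond what is already encoded in Proposition~\ref{prop:convexity} is invoked.
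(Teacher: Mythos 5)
Your proposal is correct and follows essentially the same route as the paper: verify positivity, monotonicity, and strict convexity of $\mu$ and $\nu$ on $(-\infty,0)$ by elementary calculus, then invoke Proposition~\ref{prop:convexity}. The only difference is cosmetic -- you substitute $y=-x$ and work with $f(y)=y/(\e^{y}-1)$ on $(0,\infty)$, whereas the paper argues directly in $x<0$; the sign analyses (your $p(y)=(y-2)\e^{y}+(y+2)>0$ versus the paper's $(2+x)\e^{-x}+x-2<0$) are exact mirror images of each other.
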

\begin{IEEEproof}
Obviously, the function $\mu(x)=1/(\e^{-x}-1)$ restricted to the interval $x\in(-\infty,0)$ is positive and increasing.
After some computation, one checks that
\[
\mu''(x)=\frac{\e^{-x}(1+\e^{-x})}{(\e^{-x}-1)^3}>0\quad\forall x<0,
\]
in other words, $\mu$ is strictly convex on $(-\infty,0)$. The first statement is now immediate from Proposition~\ref{prop:convexity}.

Similarly, $\nu(x)>0$ for $x<0$. To prove the monotonicity and convexity, note first that
$1-(x+1)\e^{-x}$ is strict decreasing on $(-\infty,0]$ and thus achieves its minimum ($0$) at $x=0$.
Therefore, $\nu$ is strict increasing on $(-\infty,0]$ as
\[
\nu'(x)=\frac{1-(x+1)\e^{-x}}{(1-\e^{-x})^2}>0\quad\forall x<0.
\]
Finally, a direct yet tedious computation shows that
\[
\nu''(x)=-\e^{-x}\frac{(2+x)\e^{-x}+x-2}{(\e^{-x}-1)^3},
\]
where the numerator is negative for $x<0$ as
\[
\frac{d}{dx}\left((2+x)\e^{-x}+x-2\right)=1-(1+x)\e^{-x}>0\quad\forall x<0.
\]
Hence, the function $(2+x)\e^{-x}+x-2$ is increasing on $(-\infty,0)$ and achieves its strict maximum ($0$) at $x=0$.
This proves the strict convexity $\nu''(x)>0\;\;\forall x<0$.
\end{IEEEproof}
\begin{corollary}\label{cor.rho-convex}
For $\varrho(x)=x/(1-\e^x)$, the inequality $C\varrho(TA)B<0$ holds for all $T>0$. 
\end{corollary}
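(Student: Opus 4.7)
The plan is to reduce this corollary to Corollary~\ref{cor.mu-convex} by means of the scalar identity $\varrho(y)=y-\nu(y)$, combined with the relative-degree-two condition $CAB=0$ from~\eqref{CBLB}. First, I would derive the identity by clearing denominators: since
\[
\varrho(y)+\nu(y)=\frac{y}{1-\e^y}+\frac{y}{1-\e^{-y}}=\frac{y\bigl((1-\e^{-y})+(1-\e^y)\bigr)}{(1-\e^y)(1-\e^{-y})}=\frac{y(2-\e^y-\e^{-y})}{2-\e^y-\e^{-y}}=y
\]
for all $y\ne 0$, and the right-hand side extends analytically through $y=0$, we conclude $\varrho=\operatorname{id}-\nu$ as analytic functions on a neighborhood of the real line.

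Next I would lift this identity to matrix functions. Because $\varrho$ and $\nu$ are both analytic on a neighborhood of $\sigma(TA)=\{-Ta_1,-Ta_2,-Ta_3\}\subset(-\infty,0)$, the Opitz representation (equivalently, the spectral calculus on the diagonalizable argument $TA$) gives $\varrho(TA)=TA-\nu(TA)$. Left-multiplying by $C$ and right-multiplying by $B$, and using $CAB=0$ from~\eqref{CBLB} to kill the first term, yields
\[
C\varrho(TA)B=T\,(CAB)-C\nu(TA)B=-C\nu(TA)B.
\]
By Corollary~\ref{cor.mu-convex}, the vector $\nu(TA)B$ is componentwise positive; since $C=(0,0,1)$ extracts its third (positive) coordinate, one has $C\nu(TA)B>0$, and hence $C\varrho(TA)B<0$, as required.

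No genuine obstacle arises in this approach: the whole argument is driven by the functional identity $\varrho=\operatorname{id}-\nu$, after which the convexity work already carried out in Corollary~\ref{cor.mu-convex} together with the relative-degree-two cancellation immediately produces the sign. The only small point to be careful about is the passage from the scalar identity to the matrix identity, but this is automatic because $TA$ is diagonalizable and its spectrum lies in the common analyticity region of $\varrho$ and $\nu$; a more hands-on justification would simply apply the Opitz formula to each of $\varrho(TA)$, $TA$, and $\nu(TA)$ separately and observe that the divided differences on the right sum termwise.
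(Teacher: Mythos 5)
Your proof is correct, but it takes a genuinely different route from the paper's. The paper's own argument follows the same template it used for $\nu$: it computes $\varrho''(x)=\frac{\e^x}{(1-\e^x)^3}\left(x+2+x\e^x-2\e^x\right)$, observes that the bracket equals $\e^x\bigl((x+2)\e^{-x}+x-2\bigr)<0$ for $x<0$ by the same monotonicity argument already used for $\nu''$, concludes that $\varrho$ is strictly concave on $(-\infty,0)$, and then (implicitly) applies Proposition~\ref{prop:convexity} to the strictly convex function $-\varrho$. You instead bypass all second-derivative work through the exact identity $\varrho(y)+\nu(y)=y$, which, combined with the functional calculus on the diagonalizable matrix $TA$ and the relative-degree condition $CAB=0$ from~\eqref{CBLB}, reduces the claim to the strict positivity of $\nu(TA)B$ already established in Corollary~\ref{cor.mu-convex}. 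Your version is shorter and arguably more illuminating, since it exhibits the sign of $C\varrho(TA)B$ as forced by that of $C\nu(TA)B$ rather than as an independent concavity fact; the price is that it leans on the structural property $CAB=0$ of the plant, which the paper's convexity argument does not need (and which would matter if one wanted the statement for a general output matrix $C$). One cosmetic remark: the statement as printed reads $C\varrho(\e^{TA})B$, but both the paper's proof and the only place the corollary is invoked (Proposition~\ref{prop.c0}, to obtain $CA(I-\e^{AT})^{-1}B<0$) concern $C\varrho(TA)B$, which is exactly the quantity you bound; the extra exponential in the statement is evidently a typo.
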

\begin{IEEEproof}
Similar to the previous corollary, one obtains
\[
\varrho''(x)=\frac{\e^x}{(1-\e^x)^3}\left(x+2+x\e^x-2\e^x\right),
\]
As we know from the previous proof, $(x+2)\e^{-x}+x-2<0$ when $x<0$. 
Thus, $-\varrho$ is strictly convex on $(-\infty,0)$. Applying Proposition~\ref{prop:convexity} to $-\varrho$ yields
$(-C\varrho(TA)B)>0$.
\end{IEEEproof}

\section{Proofs of Theorem~\ref{pro:fp} and Lemma~\ref{lem.signs_DJ}}\label{app.thm1}

Applying the Opitz formula (Appendix~\ref{app.opitz}) to the matrix $TA$, where $A$ is defined as in~\eqref{eq:1}, $T>0$, and $f\equiv\mu$, one easily checks that
~\eqref{eq:fp_alpha} can be equivalently rewritten\footnote{The inverse matrix is well-defined since the eigenvalues $\e^{Ta_i}$ of $\e^{-TA}$ are, obviously, greater than $1$.} as
\begin{equation*}
X=\lambda\mu(TA)B=\lambda(\e^{-AT}-I)^{-1}B,    
\end{equation*}
which, in turn, is equivalent to the relation
\begin{equation}\label{eq:aux1}
X=\e^{TA}(X+\lambda B).
\end{equation}
In view of Corollary~\ref{cor.mu-convex}, the vector $X$ from~\eqref{eq:fp_alpha} is positive.

\subsubsection*{\bf Proof of Theorem~\ref{pro:fp}}
To prove the implication $2)\Longrightarrow 1)$, recall that 1-cycle with the impulse weight $\lambda_n\equiv\lambda$ and
the pulse width $t_{n+1}-t_n\equiv T$ corresponds to the fixed point $X(t_n^-)\equiv X$ of $Q$ whose output
$\bar y_0=CX$ satisfies~\eqref{eq:f-phi-correspondence}.
Substituting $X_n=X_{n+1}=X$ and~\eqref{eq:f-phi-correspondence} into~\eqref{eq:map}, one proves~\eqref{eq:aux1}, which is equivalent to~\eqref{eq:fp_alpha}.

The proof of implication $1)\Longrightarrow 2)$ is similar: Since~\eqref{eq:fp_alpha} and~\eqref{eq:f-phi-correspondence} (where $\bar y_0=CX$) imply~\eqref{eq:aux1}, the vector $X$ from~\eqref{eq:fp_alpha} is a fixed point of $Q$. Furthermore,~\eqref{eq:f-phi-correspondence} implies that $X$ corresponds to a 1-cycle with parameters $\lambda,T$.\hfill$\blacksquare$

\subsubsection*{\bf Proof of Lemma~\ref{lem.signs_DJ}}\label{app.lemma-proof} 

The proof follows from the definition of vectors $D,J$ and the Opitz formula in Appendix~\ref{app.opitz}.

The vector $J$, being the first column of the matrix $\e^{AT}$, is strictly positive in view of Proposition~\ref{prop:convexity} 
as the function $x\mapsto \e^x$ is monotone increasing and convex on the real line.

In order to prove that $D=AX<0$, notice that
\[
D=\lambda A(\e^{-AT}-I)^{-1}B=-\lambda T^{-1}\nu(TA)B,
\] 
where $\nu(z)=\frac{z}{1-\e^{-z}}$. Hence, $D<0$ due to Corollary~\ref{cor.mu-convex}.
\hfill$\blacksquare$

\section{Proof of Theorem~\ref{th:stability}}\label{app.thm2}

First,  some technical properties for the functions $\chi,\psi$ defined in~\eqref{eq.chi} and~\eqref{eq.psi} are established. Also, unless otherwise stated,  $\xi\leq 0$ and $\eta\geq 0$ assumed. Recall that, in view of~\eqref{eq:1+}, one has $a_1<a_2<a_3$.
Along with the functions~\eqref{eq.psi} and~\eqref{eq.c0}, we will use the characteristic polynomial of $\mathscr{Q}$, that is,
\begin{equation}
\chi(s|\xi,\eta)\triangleq\det\left(sI-\mathscr{Q}(\xi,\eta)\right).\label{eq.chi}
\end{equation}

We first prove several technical propositions.
\begin{proposition}\label{prop.positive-eigs}
The functions $\psi(s|\xi,\eta)$ and $\chi(s|\xi,\eta)$ have no zeros in the interval $s\in(\e^{-a_1T},\infty)$. In particular, all \emph{real positive} eigenvalues of matrix $\mathscr{Q}$, if they exist, are stable.
\end{proposition}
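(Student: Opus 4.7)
The plan is to exploit the positivity structure that is already available: the matrix $A$ is Metzler (from~\eqref{eq:1+}), so $\e^{AT}$ is entry-wise nonnegative, and its spectral radius is $\rho(\e^{AT})=\e^{-a_1T}$ because $a_1<a_2<a_3$. The key observation is that for any $s>\e^{-a_1T}$ the Neumann series
\[
(sI-\e^{AT})^{-1}=\sum_{k=0}^{\infty}s^{-(k+1)}(\e^{AT})^{k}
\]
converges and is a nonnegative matrix. Since $C=[0,0,1]$, the row vector $C(sI-\e^{AT})^{-1}$ is nonnegative for every such $s$.

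Next I would combine this with the sign information from Lemma~\ref{lem.signs_DJ}: $J>0$ and $D<0$, while $\xi\leq 0$ and $\eta\geq 0$ by hypothesis. Hence $\xi J+\eta D\leq 0$ component-wise, so
\[
C(sI-\e^{AT})^{-1}(\xi J+\eta D)\leq 0\quad\forall s>\e^{-a_1T},
\]
and the representation~\eqref{eq.psi} of $\psi$ yields $\psi(s|\xi,\eta)\geq 1>0$ on $(\e^{-a_1T},\infty)$. Consequently $\psi$ has no zeros there. Passing to $\chi$, on the same interval one has $\det(sI-\e^{AT})=(s-\e^{-a_1T})(s-\e^{-a_2T})(s-\e^{-a_3T})>0$, so the factorisation $\chi(s|\xi,\eta)=\det(sI-\e^{AT})\psi(s|\xi,\eta)$ gives $\chi(s|\xi,\eta)>0$, proving the first claim.

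For the second claim, let $\mu>0$ be a real eigenvalue of $\mathscr{Q}(\xi,\eta)$. Then $\chi(\mu|\xi,\eta)=0$, which by the first part forces $\mu\leq\e^{-a_1T}$. Since $A$ is Hurwitz, $\e^{-a_1T}<1$, so $|\mu|<1$, i.e.\ $\mu$ lies strictly inside the unit disk. The only real step that requires care is the passage from "positivity of the resolvent" to "positivity of the row $C(sI-\e^{AT})^{-1}$"; I do not expect a real obstacle here since the Metzler/Hurwitz structure of $A$ is explicit in~\eqref{eq:1+} and the convergence of the Neumann series is guaranteed for $s$ strictly greater than the spectral radius.
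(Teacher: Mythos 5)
Your proposal is correct and follows essentially the same route as the paper's own proof: nonnegativity of the resolvent $(sI-\e^{AT})^{-1}$ via the Neumann series for $s>\e^{-a_1T}$, combined with $\xi J+\eta D\leq 0$ from Lemma~\ref{lem.signs_DJ}, to conclude $\psi(s)\geq 1$ on that interval. Your additional explicit steps (the sign of $\det(sI-\e^{AT})$ and the bound $\e^{-a_1T}<1$) are correct and merely spell out what the paper leaves implicit.
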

\begin{IEEEproof}
Retracing the proof of Lemma~\ref{lem.signs_DJ}, recall that $J>0$ and $D<0$, therefore, $\xi J\leq 0$ and $\eta D\leq 0$. 
Since $(-a_1)$ is the maximal eigenvalue of $A$ thanks to $0<a_1<a_2<a_3$, $\e^{-a_1T}$ is the spectral radius of $\e^{TA}$. Hence, 
\[
(sI-\e^{AT})^{-1}=s^{-1}(I-s^{-1}\e^{AT})^{-1}=\sum\nolimits_{k=0}^{\infty}s^{-k-1}\e^{kTA},
\]
is a nonnegative matrix. In view of~\eqref{eq.psi}, this entails that $\psi(s)\geq 1$ for $s>\e^{-a_1T}$.
Moreover, $\det(sI-\e^{AT})>0$ for $s>\e^{-a_1T}$, and hence $\chi(s)>0$ on this interval.
\end{IEEEproof}

\begin{proposition}\label{prop.residue}
The residue of $\psi(s)=\psi(s|\xi,\eta)$ at the pole $s=\e^{-a_3T}$ is nonnegative:
$
\lim_{s\to\e^{-a_3T}}\left(s-\e^{-a_3T}\right)\psi(s)\geq 0.
$
If $(\xi,\eta)\ne(0,0)$, then this residue is positive, and hence
$\psi(s)\to -\infty$ as $s\to \e^{-a_3T}-0$.
\end{proposition}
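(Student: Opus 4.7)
The plan is to work from the spectral decomposition $\e^{AT} = S\Lambda S^{-1}$, with $\Lambda = \mathrm{diag}(E_1,E_2,E_3)$ and $E_i = \e^{-a_iT}$, given by \eqref{eq:S}--\eqref{eq:f-matr} in Appendix~\ref{app.opitz}, to obtain the partial-fraction expansion
\[
\psi(s) = 1 - \sum_{i=1}^{3}\frac{(CS)_i\,\tilde s_i^\top(\xi J + \eta D)}{s - E_i},
\]
where $\tilde s_i^\top$ denotes the $i$th row of $S^{-1}$. Reading \eqref{eq:S} and \eqref{eq:S-inv} gives $(CS)_3 = 1$ and
\[
\tilde s_3^\top = \Bigl[\tfrac{g_1 g_2}{(a_1-a_3)(a_2-a_3)},\; \tfrac{g_2}{a_2-a_3},\; 1\Bigr],
\]
which by construction is a left eigenvector of both $A$ (for eigenvalue $-a_3$) and $\e^{AT}$ (for eigenvalue $E_3$). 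Picking off the residue at $s = E_3$ from the above expansion then yields the closed form $r_3 = \lim_{s\to E_3}(s-E_3)\psi(s) = -\xi\,\tilde s_3^\top J - \eta\,\tilde s_3^\top D$.

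The key step will be to evaluate the two inner products through the eigenvector identity $\tilde s_3^\top f(A) = f(-a_3)\tilde s_3^\top$, valid for any $f$ analytic near the spectrum of $A$. Applied to $J = \e^{AT}B$ and $D = \lambda A(\e^{-AT} - I)^{-1} B$, it gives
\[
\tilde s_3^\top J = E_3\,\tilde s_3^\top B,\qquad \tilde s_3^\top D = \frac{-\lambda a_3}{\e^{a_3 T}-1}\,\tilde s_3^\top B,
\]
both being scalar multiples of the same quantity $\tilde s_3^\top B = \tfrac{g_1 g_2}{(a_1-a_3)(a_2-a_3)}$, which is strictly positive because both factors in the denominator are negative and $g_1,g_2>0$. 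Therefore $\tilde s_3^\top J > 0$ and $\tilde s_3^\top D < 0$ (using $a_3>0$ and $\e^{a_3T}>1$). Since $\xi \le 0$ and $\eta \ge 0$ by the standing hypotheses, both $-\xi\,\tilde s_3^\top J$ and $-\eta\,\tilde s_3^\top D$ are nonnegative, establishing $r_3 \ge 0$.

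For the ``in particular'' claim, whenever $E_3$ is a genuine pole of $\psi$, i.e.\ $(\xi,\eta) \ne (0,0)$, the same sign argument actually gives $r_3 > 0$, so the singular part $r_3/(s - E_3)$ dominates $\psi(s)$ near $E_3$; since $s - E_3 \to 0^-$ as $s \to E_3 - 0$, one has $\psi(s) \to -\infty$, and in particular $\psi(s) < 0$ on a sufficiently small left neighborhood of $E_3$. The main obstacle in the whole argument is really just bookkeeping the signs inside $\tilde s_3^\top$ and the factors $(a_i - a_3)$; routing everything through the left eigenvector sidesteps the substantially messier direct residue computations one would be led to by applying the Opitz formula to $h(z) = 1/(s - \e^{zT})$ and its divided differences at $-a_1,-a_2,-a_3$.
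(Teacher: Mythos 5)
Your proof is correct and follows essentially the same route as the paper's: both diagonalize via the similarity matrix $S$ from \eqref{eq:S}--\eqref{eq:f-matr}, identify the residue at $\e^{-a_3T}$ as $-\bar c_3\bar b_3\bigl(\xi\e^{-a_3T}-\eta\lambda a_3(\e^{a_3T}-1)^{-1}\bigr)$ with $\bar c_3=(CS)_3=1$ and $\bar b_3=\tilde s_3^\top B=g_1g_2/\bigl((a_1-a_3)(a_2-a_3)\bigr)>0$, and conclude from $\xi\le 0$, $\eta\ge 0$. Your left-eigenvector phrasing is just a repackaging of the paper's application of $f(A)=S\,\mathrm{diag}\bigl(f(-a_i)\bigr)S^{-1}$ to $f_s$, and your explicit caveat that strict negativity of $\psi$ near the pole requires $(\xi,\eta)\neq(0,0)$ is a small but welcome refinement.
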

\begin{IEEEproof}
Applying~\eqref{eq:f-matr} to $f(z)=f_s(z)$, where
\[
f_s(z)\triangleq(sI-\e^{Tz})^{-1}\left(\e^{Tz}\xi+z(\e^{-Tz}-1)^{-1}\eta\lambda\right), 
\]
and using the definition of $J,D$, one proves that
\[
\psi(s)\overset{\eqref{eq.psi}}{=}1-Cf_s(A)B=1-\sum\nolimits_{i=1}^3\bar c_i\bar b_if_s(-a_i),
\]
where $\bar b_i,\bar c_i$ are the coordinates of the vectors $\bar B:=S^{-1}B$, $\bar C:=CS$, and $S$ is the matrix from~\eqref{eq:S}. 
Obviously, $f_s(-a_i)$ exists and is analytic (in $s$) in a vicinity of $s=\e^{-a_3T}$ for $i=1,2$, and hence
the residue in question is found as
\[
\begin{aligned}
\lim_{s\to \e^{-a_{3}T}}&\big(s-\e^{-a_{3}T}\big)\psi(s)=\\&=-\bar c_3\bar b_3\lim_{s\to \e^{-a_{3}T}}\big(s-\e^{-a_{3}T}\big)f_s(-a_3)=\\
&=-\bar c_3\bar b_3\left(\xi \e^{-a_3T}-\eta\lambda a_3(\e^{a_3T}-1)^{-1}\right).
\end{aligned}
\]
Using~\eqref{eq:S} and~\eqref{eq:S-inv}, one checks that $\bar c_3=1$ and $\bar b_3=g_1g_2/(a_2-a_3)(a_1-a_3)>0$. Hence the residue is nonnegative for all $\xi\leq 0$ and $\eta\geq 0$, being positive unless $\xi=\eta=0$. 
\end{IEEEproof}

\begin{proposition}\label{prop.c0}
The function $c(\eta)$ defined by~\eqref{eq.c0} is strictly decreasing, in particular, $c(\eta)\leq c(0)=\e^{-(a_1+a_2+a_3)T}$ for $\eta\geq 0$.
Hence, $\psi(c(\eta)|\xi,\eta)$ is well-defined for $\eta\geq 0$.
\end{proposition}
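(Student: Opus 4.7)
The plan is to verify strict monotonicity of $c_0(\eta)$ by direct inspection: the defining expression \eqref{eq.c0} is affine in $\eta$, so it suffices to determine the sign of its slope. Since the prefactor $\e^{-(a_1+a_2+a_3)T}\lambda$ is strictly positive, the question collapses to showing that
\[
\kappa\triangleq CA(I-\e^{TA})^{-1}B < 0.
\]
Once $\kappa<0$ is established, $c_0$ is strictly decreasing and the inequality $c_0(\eta)\le c_0(0)=\e^{-(a_1+a_2+a_3)T}$ for $\eta\ge 0$ is immediate by evaluating at $\eta=0$.

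To pin down the sign of $\kappa$, I would rewrite the matrix factor in scalar-function form via the matrix functional calculus:
\[
A(I-\e^{TA})^{-1}=T^{-1}\varrho(TA),\qquad \varrho(x)\triangleq\frac{x}{1-\e^{x}},
\]
which is well-defined since $A$ is Hurwitz and $\e^{-a_iT}\in(0,1)$. Therefore $\kappa=T^{-1}C\varrho(TA)B$, and the eigenvalues $-Ta_1,-Ta_2,-Ta_3$ of $TA$ all lie in $(-\infty,0)$. The proof of Corollary~\ref{cor.rho-convex} establishes that $\varrho$ is strictly concave on this interval, so applying Proposition~\ref{prop:convexity} to the strictly convex function $-\varrho$ yields $C(-\varrho)(TA)B>0$, equivalently $C\varrho(TA)B<0$ and hence $\kappa<0$. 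This is the only substantive step; everything else reduces to bookkeeping.

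It remains to verify that $\psi(c_0(\eta)|\xi,\eta)$ is well-defined, i.e.\ that $c_0(\eta)$ never coincides with a pole of $\psi(\cdot|\xi,\eta)$. By~\eqref{eq.psi} the pole set is $\{\e^{-a_1T},\e^{-a_2T},\e^{-a_3T}\}$, with minimum $\e^{-a_3T}$ since $a_1<a_2<a_3$. Using $a_1,a_2>0$ so that $\e^{-(a_1+a_2)T}<1$, one obtains
\[
c_0(\eta)\le c_0(0)=\e^{-(a_1+a_2+a_3)T}<\e^{-a_3T}\le\e^{-a_2T}\le\e^{-a_1T}
\]
for every $\eta\ge 0$, so $c_0(\eta)$ stays strictly below the entire pole set of $\psi$ and the value $\psi(c_0(\eta)|\xi,\eta)$ is well-defined. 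The only mildly non-routine step is the identification of $\kappa$ with a scalar convexity statement about $\varrho$; once that link is made, everything else follows without computation.
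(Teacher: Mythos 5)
Your proposal is correct and follows essentially the same route as the paper: it reduces the monotonicity claim to the sign of $CA(I-\e^{AT})^{-1}B$, establishes that sign via the concavity of $\varrho(x)=x/(1-\e^x)$ through Corollary~\ref{cor.rho-convex} and Proposition~\ref{prop:convexity}, and settles well-definedness by noting $c_0(\eta)\le\e^{-(a_1+a_2+a_3)T}<\e^{-a_3T}$ lies strictly below the pole set of $\psi$. You merely spell out steps the paper leaves implicit (the identity $A(I-\e^{TA})^{-1}=T^{-1}\varrho(TA)$ and the pole comparison), which is fine.
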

\begin{IEEEproof}
The first statement follows from Corollary~\ref{cor.rho-convex}, entailing that $T^{-1}C\varrho(TA)B=CA(I-\e^{AT})^{-1}B<0$, and using~\eqref{eq.c0}.
The second statement is straightforward by recalling that the poles of $\psi$ are $\e^{-a_iT}>c(\eta)$.
\end{IEEEproof}

\begin{proposition}\label{prop.negative-eigs}
If~\eqref{eq.necess-stab2} holds, then the number of roots of the polynomial $\chi(s)$ on the interval $(-\infty,-1]$, counted with multiplicity, is $0$ or $2$.
\end{proposition}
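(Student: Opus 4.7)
The plan is to reduce the claim to a sign analysis of the monic cubic polynomial $\chi(s) = \chi(s|\xi,\eta)$, using only its boundary behavior at $s = -\infty$ and at $s = -1$.

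First I would convert the hypothesis $\psi(-1) > 0$ into a sign for $\chi(-1)$. From the definition~\eqref{eq.psi}, $\chi(-1) = \det(-I - \e^{AT})\,\psi(-1)$, and
\[
\det(-I - \e^{AT}) = \prod_{i=1}^{3}\bigl(-1 - \e^{-a_i T}\bigr) < 0,
\]
since each factor is strictly negative. Hence $\chi(-1) < 0$. Separately, since $\chi$ is monic of degree three, $\chi(s) \to -\infty$ as $s \to -\infty$. Also $\chi(-1) \ne 0$, so any real roots in $(-\infty,-1]$ actually lie in the open interval $(-\infty,-1)$, which simplifies the bookkeeping below.

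The core step is a case analysis on the number $k \in \{0,1,2,3\}$ of real roots of $\chi$ in $(-\infty,-1)$ counted with multiplicity, ruling out $k = 1$ and $k = 3$. If $k = 1$, the unique root in that interval must have multiplicity one; past it $\chi$ changes sign from negative to positive and remains positive all the way up to $s = -1$ (no further roots in between), forcing $\chi(-1) > 0$, which is a contradiction. If $k = 3$, all three roots of $\chi$ are real and lie in $(-\infty,-1)$, so $\chi(s) = \prod_{i=1}^{3}(s - r_i)$ with each $r_i < -1$, and direct evaluation gives $\chi(-1) = \prod_{i=1}^{3}(-1 - r_i) > 0$, again a contradiction. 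The remaining possibilities $k = 0$ and $k = 2$ (the latter realized either by two simple roots or by one double root, both compatible with $\chi(-\infty) < 0$ and $\chi(-1) < 0$) together yield the statement.

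I do not anticipate a serious obstacle; the argument is essentially an elementary parity count for a real cubic. The only subtleties worth spelling out are the use of the explicit factorization of $\det(sI - \e^{AT})$ to pin down the sign of $\chi(-1)$, and the handling of possible multiple roots when $k = 2$, which is precisely what makes the count ``with multiplicity'' the natural one.
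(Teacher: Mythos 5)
Your proposal is correct and follows essentially the same route as the paper: both arguments rest on $\chi(s)\to-\infty$ as $s\to-\infty$ together with $\chi(-1)=-\psi(-1)\prod_{i}(1+\e^{-a_iT})<0$, and then a parity count of roots of the monic cubic on $(-\infty,-1]$. Your explicit case analysis for $k=1$ and $k=3$ is just a more careful spelling-out of the paper's one-line "even number of sign changes" argument, and in fact handles multiple roots slightly more cleanly.
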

\begin{IEEEproof}
Since $\chi(s) \to -\infty$ as $s \to -\infty$ and $\chi(-1) = -\psi(-1)(1 + \e^{-a_1T})(1 + \e^{-a_2T})(1 + \e^{-a_3T}) < 0$ if~\eqref{eq.necess-stab2} holds, the number of real roots of $\chi$ on $(-\infty, -1]$, counted with multiplicity, must be even. Consequently, the cubic polynomial $\chi$ has either no roots or two roots in this interval.
\end{IEEEproof}

\subsection*{Necessity of~\eqref{eq.necess-stab1} and~\eqref{eq.necess-stab2}}

\begin{lemma}\label{lem.tech1}
If matrix $\mathscr{Q}$ is Schur stable, then the conditions~\eqref{eq.necess-stab1} and~\eqref{eq.necess-stab2} hold.
\end{lemma}
\begin{IEEEproof}
To prove~\eqref{eq.necess-stab1}, notice that $C\e^{-AT}J=CB=0$ and  $C\e^{-AT}D=\lambda CA(I-\e^{AT})^{-1}B$ in view of~\eqref{eq:Q_prime_affine}.
Hence, $\psi(0)=1+\eta\lambda CA(I-\e^{AT})^{-1}B$. 
On the other hand, $\det\mathscr{Q}=-\chi(0)=\det\e^{AT}\psi(0)$ is the product of the eigenvalues of $\mathscr{Q}$. The Schur stability entails~\eqref{eq.necess-stab1}, because $|\chi(0)|<1$ and
\[
\psi(0)=-\det\e^{-AT}\chi(0)>-\det\e^{-AT}=-\e^{(a_1+a_2+a_3)T}.
\]

Inequality~\eqref{eq.necess-stab2} is obtained by noticing that, on one hand, $\det(I+\e^{AT})=(1+\e^{-a_1T})(1+\e^{-a_2T})(1+\e^{-a_3T})>0$, and, on the other hand, 
$\chi(-1)<0$, since $\chi(s)\to-\infty$ as $s\to-\infty$ and $\mathscr{Q}$ does not have eigenvalues on $(-\infty, -1]$. Hence, $\psi(-1)=-\chi(-1)/\det(I+\e^{AT})>0$.
\end{IEEEproof}

\subsection*{Theorem~\ref{th:stability}: Non-Critical Case}

By virtue of Proposition~\ref{prop.c0}, one has $c(\eta)\in (-\infty,\e^{-a_3T})$, on which interval the function $\psi(s|\xi,\eta)$ is well-defined. Recall that $
\psi(0|\xi,\eta)=\e^{(a_1+a_2+a_3)T}c(\eta)$ and, in view of the definition of $\psi$ in~\eqref{eq.psi}, one has 
\[
\begin{aligned}
\det\mathscr{Q}(\xi,\eta)=-\chi(0|\xi,\eta)
=\psi(0|\xi,\eta)\e^{-(a_1+a_2+a_3)T}=c(\eta).
\end{aligned}
\]

\emph{Necessity part}. The necessity of~\eqref{eq.necess-stab1} and~\eqref{eq.necess-stab2} is implied by  Lemma~\ref{lem.tech1}. To prove the necessity of~\eqref{eq.necess-stab3}, notice that its violation $\psi(c(\eta))c(\eta)\leq 0$ implies that $\psi(c(\eta))\psi(0)\leq 0$, that is,
$\psi(s)$ and $\chi(s)$ have a root $s_*$ (i.e., an eigenvalue of $\mathscr{Q}$) lying between $0$ and $c(\eta)$.
Since $\psi(0)\ne 0$, one has $0<|s_*|\leq |c(\eta)|=|\det\mathscr{Q}|$, entailing that the product of two other eigenvalues of $\mathscr{Q}$ should be at least $1$ in modulus. Hence, violation of~\eqref{eq.necess-stab3} is incompatible with the Schur stability of $\mathscr{Q}$. 

\emph{Sufficiency part}. 
The case $\xi=\eta=0$ is trivial ($\mathscr{Q}(0,0)=\e^{AT}$ is Schur stable). Assume now that~\eqref{eq.necess-stab1},~\eqref{eq.necess-stab2}, and~\eqref{eq.necess-stab3} hold, and $(\xi,\eta)\ne(0,0)$. It will be proven that $\mathscr{Q}$ has
one real eigenvalue $s_1\in(-1,\e^{-a_3T})$, whereas two other of its eigenvalues $s_2,s_3\in\mathbb{C}$ satisfy the condition $0<s_2s_3<1$. Recall that $c(\eta)<\e^{-a_3T}$ in view of Proposition~\ref{prop.c0}, and $\psi(s)$ is continuous for $s\in(-\infty,\e^{-a_3T})$.

Indeed, if $c(\eta)>0$, one has $\psi(c(\eta))>0$ and, by virtue of Proposition~\ref{prop.residue}, $\psi(s)$ has a root $s_1\in(c(\eta),\e^{-a_3T})$, being an eigenvalue of 
$\mathscr{Q}$. On the other hand, if $c(\eta)<0$, then~\eqref{eq.necess-stab1} entails that $c(\eta)>-1$. Due to~\eqref{eq.necess-stab2} and~\eqref{eq.necess-stab3}, entailing that $\psi(-1)>0>\psi(c(\eta))$, the matrix $\mathscr{Q}$ has an eigenvalue
$s_1\in(-1,c(\eta))$. In both situations, $s_1$ has the same sign as $c=\det\mathscr{Q}=s_1s_2s_3$ and $|s_1|>|c|$.
Hence, $0<s_2s_3<1$.

If we have a pair of complex-conjugate roots 
$s_2=s_3^*$, then $|s_2|=|s_3|=\sqrt{|s_2s_3|}<1$. If $s_2,s_3>0$, then $s_2,s_3<\e^{-a_1T}$ thanks to Proposition~\ref{prop.positive-eigs}. Finally, if $s_2,s_3<0$, then
$s_2,s_3>-1$ due to Proposition~\ref{prop.negative-eigs} (since, otherwise, $s_2,s_3\leq -1$ and $s_2s_3\geq 1$). Thus, in all possible situations, $\mathscr{Q}$ is Schur stable, having three eigenvalues inside the unit disk.
We have also proved that the spectral radius of $\mathscr{Q}$ is not less than $|s_1|>|c(\eta)|$. The proof of Theorem~\ref{th:stability} in the non-critical
case is complete.

\subsection*{Theorem~\ref{th:stability}: Critical Case}

Notice first that $c(\eta)=0$ if and only if 
\[
\eta=\eta_*\triangleq -\frac{1}{\lambda CA(I-\e^{TA})^{-1}B},
\]
where the denominator is positive (Proposition~\ref{prop.c0}). As has been shown, $\psi(0|\xi,\eta_*)=\e^{(a_1+a_2+a_3)T}c(\eta_*)=0$,
that is, the matrix $\mathscr{Q}$ has eigenvalue at $0$.
Introducing the derivative
\[
\psi'(s|\xi,\eta)\triangleq\frac{\partial\psi}{\partial s}(s|\xi,\eta)=C(sI-\e^{AT})^{-2}(\xi J+\eta D),
\]
the condition~\eqref{eq.necess-stab3+} can be written as $|\psi'(0|\xi,\eta)|<\e^{(a_1+a_2+a_3)T}$.
To prove Theorem~\ref{th:stability},  consider two cases.

\textbf{Case 1:} The matrix $\mathscr{Q}$ has a multiple eigenvalue at $0$, that is, for a given triple $T>0$, $\xi\leq 0$, and $\eta=\eta_*$, it holds that $\psi(0)=\psi'(0)=0$. In this case,~\eqref{eq.necess-stab3+} is fulfilled automatically. It was shown that~\eqref{eq.necess-stab2} is necessary for Schur stability; if~\eqref{eq.necess-stab2} holds, the third eigenvalue of $\mathscr{Q}$ (which is automatically real) lies between $-1$ and $\e^{-a_1T}$ due to Propositions~\ref{prop.positive-eigs} and~\ref{prop.negative-eigs}.

\textbf{Case 2:} Assume now that only one of the three eigenvalues $s_1(\eta),s_2(\eta),s_3(\eta)$ vanishes at
$\eta=\eta_*$, without loss of generality, assume that $s_1(\eta_*)=0$. Since this eigenvalue is simple, one has $\psi'(0|\xi,\eta_*)\ne 0$.
In view of the implicit function theorem, $s_1(\eta)$ is $C^1$-smooth in a vicinity of $\eta=\eta_*$. Differentiating the relation
$
\psi(s_1(\eta)|\xi,\eta)=0
$
with respect to $\eta$ and substituting $s_1(\eta_*)=0$, one shows that
\[
\begin{aligned}
\frac{ds_1(\eta_*)}{d\eta}&\psi'(0|\xi,\eta_*)=-\frac{\partial\psi}{\partial\eta}(0|\xi,\eta)\overset{\eqref{eq.psi}}{=}-C\e^{-AT}D=\\
&=-\lambda CA(I-\e^{TA})^{-1}B\overset{\eqref{eq.c0}}{=}-\e^{(a_1+a_2+a_3)T}\frac{dc(\eta_*)}{d\eta}.
\end{aligned}
\]
Recalling that $\det\mathscr{Q}(\xi,\eta)=c(\eta)$, the L'H\^opital rule leads to
\[
\begin{aligned}
\lim\nolimits_{\eta\to\eta_*}s_2(\eta)s_3(\eta)&=\lim\nolimits_{\eta\to\eta_*}\frac{c(\eta)}{s_1(\eta)}=\frac{dc(\eta_*)/d\eta}{ds_1(\eta_*)/d\eta}\\
&=-\e^{-(a_1+a_2+a_3)T}\psi'(0|\xi,\eta_*).
\end{aligned}
\]
Hence,~\eqref{eq.necess-stab3+} is equivalent to the inequality $|s_2(\eta_*)s_3(\eta_*)|<1$, which is necessary for the Schur stability 
of $\mathscr{Q}(\xi,\eta_*)$, whereas the necessity of~\eqref{eq.necess-stab2} is ensured by Lemma~\ref{lem.tech1}.

On the other hand, if $|s_2(\eta_*)s_3(\eta_*)|<1$ and~\eqref{eq.necess-stab2} hold, then, similar to the proof of non-critical case,
one shows that either $s_2(\eta_*)=\bar s_3(\eta_*)$ (and their modulus is thus less than $1$) or $s_2(\eta_*),s_3(\eta_*)\in (-1,\e^{-a_1T})$.
Since $s_1(\eta_*)=0$, the matrix $\mathscr{Q}$ is Schur stable. This proves the sufficiency part.

\section{The Jacobian in the Amplitude Modulation Case}\label{app:E}

Applying the Opitz formula to $f(x)=\e^x$, the Jacobian matrix~\eqref{eq:jacobian-amplitude} can be found explicitly, 
yielding the following.

\begin{proposition}\label{pr:amplitude_modulation}
The characteristic polynomial of $Q^\prime_F(X)$ is
 $\chi(s)= s^3-\gamma_1 s^2-\gamma_2 s -\gamma_3$,
where $\gamma_3= \e^{-(a_1+a_2+a_3)T}$ and
\begin{align*}
    \gamma_1&= \e^{-a_1T}+\e^{-a_2T}+\e^{-a_3T}\\
    &+F^\prime(\bar y_0)g_1g_2T^2 \e \lbrack -a_1T, -a_2T,  -a_3T\rbrack,\\
    \gamma_2&=F^\prime(\bar y_0) g_1g_2 T^2\left( \e \lbrack -a_1T, -a_2T \rbrack \e \lbrack -a_2T, -a_3T \rbrack\right.\\
      &- \left. \e^{-a_2T}\e \lbrack -a_1T, -a_2T,  -a_3T\rbrack \right)-\e^{-a_1T}(\e^{-a_2T}+\e^{-a_3T})\\
      &-\e^{-(a_2+a_3)T}. 
\end{align*}
\end{proposition}

The polynomial has a multiple root if and only if its discriminant vanishes~\cite{CoxLittleOShea2005}, that is,
$
\gamma_1^2\gamma_2^2
+4\gamma_2^3
-4\gamma_1^3\gamma_3
-27\gamma_3^2
-18\gamma_1\gamma_2\gamma_3
=0
$. Thanks to Proposition~\ref{pr:amplitude_modulation}, $\gamma_1$ and $\gamma_2$ are affine functions of $F'(\bar y_0)$, whereas $\gamma_3$ is independent of $F'(\bar y_0)$. Hence, the discriminant condition yields a fourth-order equation in $F'(\bar y_0)$, which can be solved numerically.

Notice that at the double multiplier point, where two eigenvalues are equal $s_1=s_2$, one has
    $\chi(s)= {(s-s_1)}^2(s-s_3)$ and $\chi(s_1)=\chi'(s_1)=0$.
Notice that $\chi'(r)=0$ is a quadratic equation, having two roots
$
r_\pm=(\gamma_1\pm\sqrt{\gamma_1^2+3\gamma_2})/3.
$
The double multiplier is the value $s_1=r_\pm$ for which $\chi(s_1)=0$.
The remaining multiplier is then found as
$s_3=\gamma_3/s_1^2$, and the spectral radius of the Jacobian is $\max (|s_1|,|s_3|)$.





\vfill

\end{document}